\title{Noncommutative Coverings \\ of Quantum Tori}
\author{Kay Schwieger and Stefan Wagner}
\date{}
\theoremstyle{plain}
	\newtheorem{thm}{Theorem}[section]
	\newtheorem{lemma}[thm]{Lemma}
\theoremstyle{definition}
	\newtheorem{defn}[thm]{Definition}
	\newtheorem{rmk}[thm]{Remark}
\newcommand*{\N}{\mathbb N}		
\newcommand*{\Z}{\mathbb Z}		
\newcommand*{\R}{\mathbb R}		
\newcommand*{\C}{\mathbb C}		
\newcommand*{\one}{\mathbbm 1}		
\newcommand*{\tensor}{\otimes}		
\DeclarePairedDelimiter{\scal}{\langle}{\rangle}	
\DeclareMathOperator{\id}{id}		
\newcommand{\cf}{\mbox{cf.}\xspace}			
\newcommand{\eg}{\mbox{e.\,g.}\xspace}			
\newcommand*{\ie}{\mbox{i.\,e.}\xspace}			
\newcommand*{\ndash}{\nobreakdash-}
\newcommand*{\alg}{\mathcal}		
\newcommand*{\hilb}{\mathcal} 		
\newcommand*{\Star}{$^*$\ndash}
\DeclareMathOperator{\Ad}{Ad}		
\DeclareMathOperator{\Aut}{Aut}
\DeclareMathOperator{\Inn}{Inn}
\DeclareMathOperator{\Out}{Out}
\DeclareMathOperator{\U}{\mathcal U}
\DeclareMathOperator{\SL}{SL}
\DeclareMathOperator{\Pic}{Pic}
\DeclareMathOperator{\Homeo}{Homeo}
\newcommand*{\aA}{\alg A}
\newcommand*{\aB}{\alg B}
\newcommand*{\Cont}{C}
\newcommand*{\cnTorus}[1][\theta]{\ensuremath{\mathbb A^n_{#1}}\xspace}
\newcommand*{\snTorus}[1][\theta]{\ensuremath{\mathbb T^n_{#1}}\xspace}
\newcommand*{\cTorus}[1][\theta]{\ensuremath{\mathbb A^2_{#1}}\xspace}
\newcommand*{\sTorus}[1][\theta]{\ensuremath{\mathbb T^2_{#1}}\xspace}
\DeclarePairedDelimiterX{\lprod}[2]{\, \prescript{}{#1}{\langle}}{\rangle}{#2}
\DeclarePairedDelimiterX{\rprod}[2]{\langle}{\rangle_{#1}}{#2}
\begin{document}

\author{
Kay Schwieger \thanks{
		iteratec GmbH, Stuttgart, 
		\href{mailto:kay.schwieger@gmail.com}{\nolinkurl{kay.schwieger@gmail.com}}
	} \and 
	Stefan Wagner \thanks{
		Blekinge Tekniska H\"ogskola,
		\href{mailto:stefan.wagner@bth.se}{\nolinkurl{stefan.wagner@bth.se}}
	}
}
\maketitle

\begin{abstract}
	\noindent
	We introduce a framework for coverings of noncommutative spaces. Moreover, we study noncommutative coverings of irrational quantum tori and characterize all such coverings that are connected in a reasonable sense.

	\vspace*{0,5cm}

	\noindent
	Keywords: Free action, noncommutative covering, fundamental group, quantum tori, gauge action, connected, Bogoliubov transformation, Schr\"odinger representation.

	\noindent
	MSC2010: 46L85, 14F35 (primary), 55R10, 14A20 (secondary)
\end{abstract}

\section{Introduction}

In algebraic topology one obtains information about a topological situation by studying an associated algebraic situation, which are occasionally easier to investigate. The fundamental group (or Poincar\'e group) of a topological space is the first and simplest realization of this idea and turns out to be an important topological invariant. But unlike topological $K$-theory and de Rham co-homology, which both have natural noncommutative counterparts (see \eg \cite{cycliccuntz,gracia2000} an ref.  therein), the absence of a reasonable noncommutative theory of loops makes it difficult to generalize the concept of the fundamental group to the noncommutative setting. Instead, its characterization as the group of fibre preserving homeomorphisms of the universal covering (in case it exists) is of particular interest. For an approach to noncommutative coverings we may look at free actions of quantum groups on C\Star algebras, which offer a good candidate for noncommutative principal bundles (see \eg~\cite{BaCoHa15,Ell00,SchWa15,SchWa17}). From this perspective, free actions of discrete quantum groups on C\Star algebras provide a  framework for noncommutative coverings (see also \cite[Sec.~3]{Mil08} for the notion of finite \'etale coverings in algebraic geometry). 

In the present article our investigation revolves around the concrete class of quantum tori, which have been studied by many authors (see \eg \cite{Elliott86,ElliottEvans93,Elliott93,Rieffel89}) and have motivated some important work in algebraic $K$-theory such as the Pimsner-Voiculescu exact sequence for crossed products. By using ideas and methods from \cite{SchWa15,SchWa16}, we are able to construct examples of noncommutative coverings of quantum tori and to characterize all noncommutative coverings of quantum tori that are connected in a reasonable sense. More detailedly, the paper is organized as follows.

After this introduction we provide some preliminaries and notations. In particular, we discuss the quantum 2-torus $\cTorus$ in its Schr\"odinger representation and Bogoliubov transformations. The purpose of Section \ref{sec:motivation} is to motivate our approach towards noncommutative coverings and to outline some of the occurring questions. 

The main objective of Section \ref{sec:connected coverings} is to develop a reasonable noncommutative covering theory for the quantum $n$-torus $\cnTorus$. From the classical theory of coverings we know that the restriction to connected coverings is crucial. However, some typical generalizations of connectedness to noncommutative C\Star algebras are not suitable for our purposes. The most naive approach would be to call a unital C\Star algebra connected if it only contains the trivial projections $0$ and~$\one$. This would render all non-trivial finite-dimensional algebras disconnected but the quantum 2-tori $\cTorus$ for irrational $\theta \in \R$ would be highly disconnected, which is not in our interest. The next idea might be to call a C\Star algebra connected if its center contains only trivial projections. Then the quantum tori for irrational $\theta$ are connected but the discrete quantum tori, which are isomorphic to a matrix algebra, are connected, too. Since the discrete quantum tori admit a free ergodic action of the group $G = C_n \times C_n$ (see~\cite[Expl.~4.26]{Wa14}), this would have rather strange consequences for possible noncommutative notions of a universal covering or a fundamental group for the algebra~$\C$. Our approach here relies on the fact that $\cnTorus$ admits a free and ergodic gauge action of the classical $n$-torus $\mathbb{T}^n$. The corresponding infinitesimal form may be regarded as a replacement of directional derivatives and hence as an analogue of the tangent space in the classical setting (\cf \cite{CoRi87,Rieffel90}). From this point of view, the notion of connectedness that we propose in Definition~\ref{def:lift+connectedness} is motivated by the following two classical facts:
\begin{enumerate}
\item[(i)]
The tangent space of the covering space coincides with the horizontal lift of the tangent space of the base space.
\item[(ii)]
The covering space is connected if and only if every smooth function with vanishing derivative is constant.
\end{enumerate}
In this way we are able to show that each connected covering of $\cnTorus$ must necessarily have an Abelian structure group that is a finite quotient of $\mathbb{Z}^n$ (Lemma~\ref{lem:G_Abelian}). Moreover, we obtain  a characterization of connected coverings of $\cnTorus$ which is similar to the characterization of finite coverings of the classical n-torus $\mathbb{T}^n$ (Theorem~\ref{thm:main thm 2}). We conclude with a proposal for a fundamental group of~$\cnTorus$.

The aim of Section~\ref{sec:smooth abelian covering} is to study in more details noncommutative coverings of generic irrational quantum 2-tori with finite Abelian structure groups that are not necessarily connected in the sense of Definition~\ref{def:lift+connectedness} but at least smooth in a suitable way (see Definition~\ref{def:smooth covering}). Using Bogoliubov transformations we are able to show that the finite Abelian structure groups of smooth converings of $\cTorus$ are essentially the finite Abelian subgroups of $(\mathbb T / \langle \exp(2\pi\imath \theta) \rangle)^2 \rtimes \SL_2(\mathbb{Z})$ (Theorem~\ref{thm:main thm 1}). 

At this place we would like to point out that Mahanta and Suijlekom proposed in \cite{MaSu09} a fundamental group for $\cTorus$ which is based on Nori's approach to \'etale fundamental groups of smooth quasiprojective curves and involes finitely generated projective modules and Tannakian categories. Moreover, we believe that it is worth paying attention to the ideas presented in \cite{Ivan17}. There the author studies examples of noncommutative coverings similar to ours in the context of finitely generated and projective Hilbert modules. Last but not least we would like to mention Canlubos PhD thesis \cite{Can16}, which revolves around noncommutative coverings from a Hopf-Galois theoretic perspective.

\section{Preliminaries and Notations} 
\label{sec:pre+not}

Our study revolves around noncommutative coverings of quantum tori. Consequently, we blend tools from geometry, operator algebras and representation theory.

\subsection*{Classical Coverings}

Let $Y,X$ be topological spaces. A covering is a triple $(Y,X,q)$, where $q: Y\to X$ is a continuous and locally trivial map with discrete fibers. Given a covering $(Y,X,q)$, we write $\Delta(Y):=\{\phi \in \Homeo(Y) \;|\; q\circ \phi = q\}$ for the corresponding group of so-called \emph{deck transformations}. 


\subsection*{C$^*$-Dynamical Systems}

Let $\aA$ be a unital C\Star algebra and $G$ a topological group acting on $\aA$ by \Star automorphism $\alpha_g:\aA \to \aA$ ($g \in G$) such that $G \times \aA \to \aA$, $(g,a) \mapsto \alpha_g(a)$ is continuous. We call such a triple $(\aA,G,\alpha)$ a C\Star dynamical system. Two C\Star dynamical systems $(\aA, G, \alpha)$ and $(\aA', G', \alpha')$ are called \emph{equivalent} if there is a \Star isomorphism $\Phi:\aA \to \aA'$ and a group isomorphism $\phi:G \to G'$ such that $\Phi \circ \alpha_g = \alpha'_{\phi(g)} \circ \Phi$ for all $g \in G$.

Given a C\Star dynamical system $(\aA,G,\alpha)$ with a compact group $G$ and an irreducible representation $(\pi,V)$ of $G$, we denote by $A(\pi) \subseteq \aA$ the corresponding isotypic component. The Peter-Weyl Theorem implies that the algebraic direct sum $\smash{\bigoplus_{\pi \in \hat G} A(\pi)}$ is a dense \Star subalgebra of~$\aA$, where $\hat G$ denotes the set of equivalence classes of all irreducible representations of~$G$.

\subsection*{Freeness}

There are various non-equivalent notions of freeness of C\Star dynamical systems around the literature (see \eg \cite{EchNeOy09} or \cite{Phi09} and ref.~therein). For the preset work we say that a C\Star dynamical system $(\aA,G,\alpha)$ with a compact group $G$ is \emph{free} if the  \emph{Ellwood map}
\begin{align*}
	\Phi:\aA\otimes_{\text{alg}}\aA\rightarrow \Cont(G,\aA), \qquad \Phi(x\otimes y)(g):=x\alpha_g(y)
\end{align*}
has dense range with respect to the canonical C\Star norm on $\Cont(G,\aA)$. This condition was originally introduced for actions of quantum groups on C\Star algebras by D.~A.~Ellwood~\cite{Ell00} and is known as the Ellwood condition. For equivalent formulations of freeness we may refer the reader to~\cite{SchWa15,SchWa17}.

\subsection*{The Picard Group}

Let $\aA$ be a unital C\Star algebra. We write $\Pic(\aA)$ for the Picard group, \ie, the group of equivalene classes of Morita self-equivalences over $\aA$, where the group law is given by the tensor product $[M] + [N] := [M \tensor_\aA N]$ for all Morita self-equivalences $M$, $N$ over $\aA$. Moreover, for every \Star automorphism $\alpha$ of $\aA$ we denote by $M_\alpha$ the Morita self-equivalence over $\aA$ that is given, as Banach space, by $\aA$ itself with the usual left multiplication and left inner product and the right module structure and right inner product given by $x \,. \,a := x \, \alpha(a)$ and $\rprod{\aA}{x,y} := \alpha^{-1}(x^*y)$ for all $x,y \in M_\alpha$ and $a \in \aA$. It is easily checked that the map $\alpha \mapsto [M_\alpha]$ provides a group homomorphism \mbox{$\Aut(\aA) \to \Pic(\aA)$} with the inner automorphisms as kernel, \ie, the outer automorphism group $\Out(\aA) := \Aut(\aA) / \Inn(\aA)$ may be regarded as a subgroup of $\Pic(\aA)$. We also recall that for a free C\Star dynamical system $(\aA, G, \alpha)$ with compact Abelian group $G$ the isotypic component $A(\chi)$ of a character $\chi \in \hat G$ is a Morita self-equivalence over $\aA^G$, where the inner products are given by ${}_{\aA^G}\langle x, y \rangle = xy^*$ and $\langle x,y \rangle_{\aA^G} = x^*y$ for $x,y \in A(\chi)$. The corresponding group homomorphism (see \cite[Prop.~4.4]{SchWa15})
\begin{equation}\label{eq:pic hom}
	\varphi: \hat{G} \rightarrow \Pic(\aA^G)\qquad \text{given by} \qquad \varphi(\pi):=[A(\pi)]
\end{equation}
is called the \emph{Picard homomorphism} of $(\aA, G, \alpha)$.

\subsection*{Quantum Tori}

Let $n\in\mathbb{N}$. Furthermore, let $\theta$ be a real skew-symmetric $n\times n$-matrix. The \emph{quantum $n$-torus} $\cnTorus$ is the universal C\Star algebra generated by unitaries $u_1,\ldots,u_n$ subject to the relations $u_ku_l=\exp(2\pi \imath \theta_{k\ell}) u_lu_k$ for all $1\leq k,\ell\leq n$. The smaller \Star algebra generated by all rapidly decreasing noncommutative polynomials in $u_1,\ldots,u_n$ is called the \emph{smooth quantum $n$-torus} and will be denoted by $\snTorus$. The matrix $\theta$ is called \emph{quite irrational} if, for all $\lambda \in \Z^n$, the condition $\exp(2\pi \imath \, \scal{\lambda, \theta \cdot \mu}) = 1$ for all $\mu \in \Z^n$ implies $\lambda = 0$. We point out that $\cnTorus$ is a simple C\Star algebra if $\theta$ is quite irrational. The classical $n$-torus $\mathbb T^n$ acts on $\cnTorus$ via so-called \emph{gauge transformations}, which is given on generators by $\gamma_z(u_k):=z_k \cdot u_k$ for each $z=(z_1,\ldots,z_n)\in\mathbb{T}^n$ and $1\leq k \leq n$. The corresponding C\Star dynamical system $(\cnTorus, \mathbb{T}^n, \gamma)$ is free and ergodic. 

For the special case $n=2$ we identify a skew-symmetric matrix $\theta$ with its upper right off-diagonal entry $\theta \in \R$. The quantum 2-torus $\cTorus$ admits an action of the group $\SL_2(\Z)$, given on generators $u$ and $v$ by 
\begin{align*}
	M.u = u^a v^b \qquad \text{and} \qquad M.v = u^c v^d \qquad \text{for}~M = \begin{psmallmatrix} a & b \\ c & d \end{psmallmatrix}.
\end{align*}
For each character $\lambda \in \Z^2 = \widehat{\mathbb T^2}$ this action maps the isotypic component $\cTorus(\lambda)$ with respect to the gauge action into $\cTorus(M \cdot \lambda)$. For this reason, we refer to this action as \emph{lattice transformations} of $\cnTorus$.

\subsection*{Smooth Automorphisms of the Quantum 2-Torus}
\label{sec:Bogoliubov}

Interesting for us are the so-called \emph{smooth automorphisms} of $\cTorus$, by which we mean automorphisms $\alpha:\cTorus \to \cTorus$ satisfying $\alpha(\sTorus) \subseteq \sTorus$. For an irrational number $\theta \in \R$ the group $\Aut^\infty(\cTorus)$ of all smooth automorphisms of $\cTorus$ was determinded by Elliott \cite{Elliott86} (see also \cite{Elliott93}). It is isomorphic to the semidirect product
\begin{equation*}
	\Aut^\infty(\cTorus) := P\U_0(\sTorus) \rtimes \bigl( \mathbb T^2 \rtimes \SL_2(\Z) \bigr).
\end{equation*}
Here, $P\U_0(\sTorus)$ denotes the connected component of the projective unitary group of $\sTorus$ and $\mathbb T^2$ acts via gauge transformations and $\SL_2(\Z)$ via lattice transformations. With respect to this identification the inner automorphisms in $\Aut^\infty(\cTorus)$ are precisely given by the subgroup $P\U_0(\sTorus) \rtimes \langle \exp(2\pi\imath \theta) \rangle^2$. In particular, all inner automorphisms in $\Aut^\infty(\cTorus)$ are implemented by unitaries inside of the smooth algebra \sTorus. We denote the corresponding quotient group of \emph{smooth outer automorphisms} by
\begin{equation}\label{eq:smooth}
	\Out^\infty(\cTorus) := \Aut^\infty(\cTorus) / P\U(\sTorus) = (\mathbb T / \langle \exp(2\pi\imath \theta) \rangle)^2 \rtimes \SL_2(\Z).
\end{equation}

\subsection*{Bogoliubov Transformations}\label{sec:bogo}

Let $\theta \in \R$ be irrational. For our purposes it is convenient to consider the quantum 2-torus~$\cTorus$ in its \emph{Schr\"odinger representation} on $L^2(\R)$ (see \eg~\cite{Meyer95}). In order to briefly introduce this representation let $P$ and $Q$ be self-adjoint extensions of the unbounded operators $Q \psi(s) = s \cdot \psi(s)$ and $P \psi(s) = -2 \pi \imath \theta \cdot (\partial_s \psi)(s)$, $s \in \R$, defined on all smooth functions $\psi \in L^2(\R)$. Then the family of unitaries 
\begin{align*}
	S_x := e^{-\pi \imath \theta x_1 x_2} \; e^{\imath x_1Q} \; e^{\imath x_2 P} = e^{\imath(x_1 Q + x_2 P)} 
\end{align*}
for $x \in \R^2$ generate the CCR-algebra also known as Weyl-algebra $\mathrm{CCR}(\R^2, \theta)$, which is the unique C\Star algebra generated by unitaries $S_x$, $x \in \R^2$, that satisfies the relation $S_x S_y = \exp(2\pi \imath \theta (x_1y_2 - x_2 y_1)) \; S_y S_x$ for all $x, y \in \R^2$ (see~\cite{Petz90} for details). This algebra is simple and its representation on~$L^2(\R)$ is irreducible. Due to their commutation relation, the unitaries 
\begin{align*}
u:= S_{(1,0)} = e^{\imath Q} \qquad \text{and} \qquad v := S_{(0,1)} = e^{\imath P}
\end{align*}
generate the quantum 2-torus $\cTorus$. It is easy to see that also the representation of~$\cTorus$ on~$L^2(\R)$ is irreducible. 

Our motivation for studying the Schr\"odinger representation is that all smooth automorphisms of $\cTorus$ can be implemented by unitaries on~$L^2(\R)$. The gauge transformations are provided by restricting $\Ad[S_x]$, $x \in \R^2$, to the algebra $\cTorus$. For the lattice transformations we first note that for $M = \begin{psmallmatrix} a & b \\ c & d \end{psmallmatrix}$ in $\SL_2(\R)$ the unitaries $\tilde S_x := S_{Mx}$ for $x \in \R^2$ also satisfy the commutation relation of $\mathrm{CCR(\R^2, \theta)}$. By virtue of the uniqueness result of von Neumann \cite{vNeu1931}, there is a unitary $U_M \in \mathcal B\bigl( L^2(\R) \bigr)$ with $U_M^{} S_x^{} U_M^* = S_{Mx}$ for all $x \in \R^2$. The corresponding automorphism $\beta_M := \Ad[U_M]$ is called a \emph{Bogoliubov transformation}. Restricting $\beta_M$ to the algebra $\cTorus$ then yields the lattice transformation corresponding to~$M$. The automorphisms $\Ad[u]$, $u \in \sTorus$, together with the family $\Ad[S_x]$, $x \in \R^2$, and the Bogoliubov transformations $\beta_M$, $M \in \SL_2(\Z)$, generate a group of \Star automorphisms of $\alg B(L^2(\R))$ that is isomorphic to the semi-direct product $P\U_0(\sTorus) \rtimes \bigl( \R^2 \rtimes \SL_2(\Z) \bigr)$.

\pagebreak[3]
\section{Noncommutative Coverings via Free Actions}\label{sec:motivation}

In this section we motivate our approach towards noncommutative coverings and outline some of the occurring questions. First and foremost we would like to point out that our long-term goal is to find a suitable notion of a fundamental group for C$^*$-algebras. From this perspective, one of the most important features of classical covering theory is that the fundamental group of a sufficiently connected space can be identified with the group of deck transformations of the corresponding universal covering and appears in this manner as a symmetry group.

The most prominent example of a covering is defined in terms of the exponential map $\exp: \R \to \mathbb{T}$, $t \mapsto \exp(2 \pi \imath t)$. Its deck transformation group $\Delta(\R)$ is given by translations of $\R$ by integers. Since $\R$ is simply connected, it follows that the fundamental group $\pi_1(\mathbb T)$ is isomorphic to $\mathbb{Z}$. Also note that $\Delta(\R)$ acts freely and properly discontinuously. In fact, for many interesting coverings the group of deck transformations acts freely and properly discontinuously. Moreover, each free and properly discontinuous action defines a covering in terms of the corresponding quotient map.

These facts suggest to address noncommutative coverings in terms of free actions of discrete groups on C\Star algebras. Looking at the exponential map $\exp:\R \to \mathbb{T}$ we see that, despite the compact base space, the involved spaces and groups are not compact in general. Unfortunately, the subject of free actions is better understood in the compact case (see \eg \cite{BaCoHa15, SchWa15, SchWa16, SchWa17} and ref.~therein). To give a prototypical example of the problems arising from non-compactness, note that the free and properly discontinuous action which corresponds to $\exp:\R \to \mathbb{T}$ is from a C\Star algebraic point of view defined in terms of the map
\begin{align*}
	\alpha: \mathbb{Z} \times \Cont_0(\R) \to \Cont_0(\R), \qquad \alpha(\lambda,f)(t):=f(\exp(2 \pi \imath \lambda t)). 
\end{align*}
The quotient space $\R / \Z$ is clearly homeomorphic to the circle $\mathbb T$, but the relation between the C\Star algebras $\Cont_0(\R)$ and $\Cont(\mathbb T)$ is more involved. For instance, $\Cont(\mathbb T)$ is not the fixed point algebra defined by the map $\alpha$.
This suggests to extend $\alpha$ to a suitable compactification. In fact, we may look at the Bohr compactification of $\R$ or, equivalently, at the C\Star algebra $\Cont_{\mathrm{ap}}(\R)$ of almost periodic functions on $\R$, \ie, the closure of all characters of $\R$ with respect to the supremum norm. The action $\alpha$ straightforwardly extends to $\Cont_{\mathrm{ap}}(\R)$ and a few moments thought show that $\Cont(\mathbb T) = \Cont_{\mathrm{ap}}(\R)^\Z$. Furthermore, the C\Star dynamical system $\bigl( \Cont_{\mathrm{ap}}(\R), \alpha, \Z \bigr)$ may be regarded as a suitable limit of the collection of C\Star dynamical systems defined, for $m \in \mathbb{N}$, in terms of the unital C\Star subalgebra of bounded continuous function on $\R$ generated by the family of functions 
\begin{align*}
	f_{z,\lambda}(t):=z^t\exp(2 \pi \imath \lambda t), \qquad z \in C_m, \, \lambda \in \mathbb{Z},
\end{align*}
and its canonical dual action by $\Z/m\Z \cong \widehat{C_m}$, where $C_m:=\{z \in \mathbb{T} \;|\; z^m=1\}$. Here, the C\Star dynamical systems under consideration play the role of intermediate coverings of $\Cont(\mathbb T)$ with compact covering space, in which the corresponding structure groups appear as finite quotients of the fundamental group.

In order to avoid technical issues concerning compactifications, we will restrict our investigation to free actions of finite groups on unital C\Star algebras. In the classical case this corresponds to finite quotients of the fundamental group and compact covering spaces. For sake of a concise language we agree on the following convention inspired by classical covering theory: If $(\aA, G, \alpha)$ is a free C\Star dynamical system with a finite group~$G$ and fixed point algebra $\aB$, then we briefly say that $(\aA, G, \alpha)$ is a \emph{covering} of $\aB$.

\pagebreak[3]
\section{Connected Coverings of Quantum Tori}\label{sec:connected coverings}

In this section we develop a covering theory for the quantum $n$-torus $\cnTorus$. To this end first recall that $\cnTorus$ admits a free and ergodic gauge action $(\cnTorus, \mathbb{T}^n, \gamma)$. The infinitesimal form of $\gamma$ then yields an injective Lie algebra representation of $\R^n$ by \Star derivations on the smooth quantum $n$-torus $\snTorus$. This Lie algebra representation may be regarded as a replacement of directional derivatives and hence as an analogue of the tangent space in the classical case (\cf \cite{CoRi87,Rieffel90}). 

For our purposes it is convenient to view the gauge action as an action of $\R^n$. That is, using the same letter for the action, we consider the C\Star dynamical system $(\cnTorus, \mathbb{R}^n, \gamma)$ given on generators by $\gamma_s(u_k) := \exp(2 \pi \imath s_k) u_k$ for each $s=(s_1,\ldots,s_n) \in \mathbb{R}^n$ and $1 \le k \le n$. 

\pagebreak[3]
\begin{defn}\label{def:lift+connectedness}
	Let $\aA$ be a C\Star algebra containing $\cnTorus$.
	\begin{enumerate}
	\item 
		We call a C\Star dynamical system $(\aA, \mathbb{R}^n,\beta)$ a \emph{lift of the gauge action} if the kernel $\Gamma := \ker \beta$ is of maximal rank, \ie, $\R^n_\Gamma := \R^n/\Gamma$ is compact, and 
		\begin{equation*}
			\beta_s(x) = \gamma_s(x)
		\end{equation*}
		holds for all $s \in \R^n$ and $x \in \cnTorus$.
	\item
		We say that a covering $(\aA, G, \alpha)$ of $\cnTorus$ is \emph{connected} if there is an ergodic lift $(\aA, \mathbb{R}^n,\beta)$ of the gauge action such that $\alpha_g \circ \beta_s = \beta_s \circ \alpha_g$ for all $g \in G$ and $s \in \mathbb{R}^n$.
	\end{enumerate}
\end{defn}

\begin{rmk} 	\label{rmk:compact_gauge}
	If the lift of a gauge action is ergodic, then the factored system $(\aA, \R^n_\Gamma, \beta)$ is automatically free (see Lemma~\ref{lem:injective}).
\end{rmk}

Classically, every covering of $\mathbb T^n$ admits a horizontal lift of the tangent space of $\mathbb T^n$ into the tangent space of the covering. Since the group of deck transformations act properly discontinuously, the tangent space of the covering in fact coincides with the horizontal lift. In the noncommutative framework a lift of the gauge action on $\cnTorus$ replaces the horizontal lift and the notion of connectedness may then be regarded as an analogue to the classical fact that the covering is connected if and only if every smooth function with vanishing derivative is constant.

Recall that, given a subgroup $\Gamma \le \Z^n$ of maximal rank, there is an invertible $n \times n$-matrix $M$ with integer valued entries such that $\Gamma = M \cdot \mathbb{Z}^n$. The dual group of $\mathbb{R}^n_\Gamma$ is parametrized by $(M^{-1})^T \cdot \mathbb{Z}^n$, where each character of $\mathbb{R}^n_\Gamma$ is of the form 
\begin{align*}
	\chi_\lambda:\mathbb{R}^n_\Gamma \to \mathbb{T}, \qquad\chi_\lambda([s]):=\exp(2 \pi \imath \langle \lambda,s\rangle \bigr)
\end{align*}
for some $\lambda \in (M^{-1})^T \cdot \mathbb{Z}^n$. For sake of brevity we do not distinguish between $\lambda$ and~$\chi_\lambda$.

\begin{lemma} 	\label{lem:gauge_lift}
	Let $\theta$ be quite irrational and let $(\aA, \mathbb{R}^n, \beta)$ be an ergodic lift of the gauge action with $\ker(\beta)=M \cdot \mathbb{Z}^n$ for some invertible $n \times n$-matrix $M$ with integer valued entries. Then $(\aA, \mathbb{R}^n, \beta)$ is equivalent to $(\mathbb A^n_{\theta'}, \mathbb{R}^n, \gamma')$ for some skew-symmetric $n \times n$-matrix $\theta'$ satisfying 
	\begin{equation} 	\label{eq:theta_relation}
		M\theta'M^T  \in \theta + M_n(\Z)
	\end{equation}
	and the canonical gauge action $\gamma'$ on $\cnTorus[\theta']$.
\end{lemma}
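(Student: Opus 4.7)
My plan is to extract from the one-dimensional isotypic components of the factored gauge lift a system of unitary generators satisfying quantum-torus relations, and then to read off the congruence between $\theta$ and $\theta'$ from their commutation bicharacter. First I would invoke Remark~\ref{rmk:compact_gauge} to see that the factored system $(\aA, \R^n_\Gamma, \beta)$ is free, while ergodicity gives $\aA^{\R^n} = \C\one$. The character group of $\R^n_\Gamma$ is the lattice $\Lambda := (M^{-1})^T \Z^n$, and combining freeness with the trivial fixed-point algebra forces every isotypic component $\aA(\lambda)$ to be one-dimensional and spanned by a unitary $w_\lambda$. Because $M$ has integer entries, $\Z^n \subseteq \Lambda$; the identity $\beta_s(u_k) = e^{2\pi\imath s_k} u_k$ places $u_k$ in $\aA(e_k)$, so after rescaling we may set $w_{e_k} := u_k$.

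Next I would set $\mu_k := M^{-T} e_k$, which gives a $\Z$-basis of $\Lambda$, and $v_k := w_{\mu_k}$. The ordered monomials $v_1^{a_1} \cdots v_n^{a_n}$ are unitaries in the one-dimensional component $\aA(M^{-T}a)$, so their linear span is the algebraic direct sum of all isotypic components and is dense in $\aA$. Since $v_k v_l$ and $v_l v_k$ both lie in the one-dimensional component $\aA(\mu_k + \mu_l)$, there is a unique skew-symmetric real matrix $\theta'$ with $v_k v_l = e^{2\pi \imath \theta'_{kl}} v_l v_k$. The universal property of $\cnTorus[\theta']$ then provides a \Star homomorphism $\Phi\colon \cnTorus[\theta'] \to \aA$ sending the canonical generators to the $v_k$; setting $\phi\colon \R^n \to \R^n$, $\phi(t) := Mt$, a short check on generators gives $\Phi \circ \gamma'_t = \beta_{\phi(t)} \circ \Phi$. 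Hence $\Phi$ is $\phi$-equivariant and bijectively matches the one-dimensional isotypic components of $\gamma'$ with those of $\beta$; a standard averaging argument with the $\R^n_\Gamma$-invariant conditional expectation then forces $\Phi$ to be injective, while surjectivity is immediate from density, yielding the claimed equivalence of C\Star dynamical systems.

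Finally, to recover the congruence I would use the commutation bicharacter $b\colon \Lambda \times \Lambda \to \mathbb T$ defined by $w_\lambda w_\mu = b(\lambda,\mu)\, w_\mu w_\lambda$, which is $\Z$-bilinear in additive notation. On the basis $(\mu_k)$ one has $b(\mu_k,\mu_l) = e^{2\pi \imath \theta'_{kl}}$, while on the sublattice $\Z^n \subseteq \Lambda$ the relations $u_k u_l = e^{2\pi \imath \theta_{kl}} u_l u_k$ give $b(e_k,e_l) = e^{2\pi \imath \theta_{kl}}$. Expanding $e_k = \sum_l M_{kl} \mu_l$ by bilinearity yields $b(e_k,e_l) = \exp\bigl(2\pi \imath (M \theta' M^T)_{kl}\bigr)$, and comparing the two expressions gives $(M \theta' M^T)_{kl} - \theta_{kl} \in \Z$ for all $k,l$, as required. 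The main obstacle will be the isomorphism claim for $\Phi$: since $\theta'$ is not yet known to be quite irrational, simplicity of $\cnTorus[\theta']$ cannot be invoked, and injectivity must be obtained from the equivariant decomposition into one-dimensional isotypic components rather than from an abstract uniqueness theorem; everything else should be bookkeeping with Peter--Weyl and the bicharacter $b$.
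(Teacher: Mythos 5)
Your proposal is correct, and its skeleton coincides with the paper's: both arguments use ergodicity together with freeness of the factored system (Remark~\ref{rmk:compact_gauge}) to see that every isotypic component over the lattice $(M^{-1})^T\Z^n$ is one-dimensional and spanned by a unitary, both take the unitaries attached to the basis $(M^{-1})^T e_k$ as generators, fix $\theta'$ from their commutation phases, and obtain the congruence~\eqref{eq:theta_relation} by bilinear expansion of the commutation bicharacter over the sublattice $\Z^n$ (your computation with $b$ is exactly the paper's identity~\eqref{eq:important eq} specialized to $e_k, e_\ell$). The one genuine difference is how the isomorphism $\aA \cong \cnTorus[\theta']$ is obtained. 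The paper derives the congruence \emph{first} --- note that this derivation takes place entirely inside $\aA$ and does not require the map $\Phi$ --- then invokes \cite[Lem.~1.8]{Phi06} to conclude that $\theta'$ is quite irrational, hence $\cnTorus[\theta']$ is simple, so the surjection provided by the universal property is automatically injective. Consequently, your remark that simplicity ``cannot be invoked'' is only an artifact of your chosen ordering, not a real obstruction. Your alternative, however, is perfectly sound: the canonical surjection intertwines $\gamma'_t$ with $\beta_{Mt}$, both actions factor through $\mathbb{T}^n$, and faithfulness of the invariant conditional expectation (equivalently, vanishing of all Fourier coefficients) kills the kernel. This route buys something: it is self-contained, avoids the external citation, and in fact shows that the quite-irrationality hypothesis on $\theta$ is not needed for this particular lemma (the paper uses it only for the simplicity shortcut; it is of course essential elsewhere, e.g.\ in Theorem~\ref{thm:main thm 2}), whereas the paper's route is shorter modulo the cited lemma. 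One small slip: $\theta'$ is not \emph{unique} --- the commutation phases determine it only modulo skew-symmetric integer matrices --- but this does not affect the argument.
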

\begin{proof}
	Since $\beta$ is ergodic, for each character $\lambda \in \widehat{\R^n_\Gamma} = (M^{-1})^T \cdot \Z^n$ the isotypic component of $(\aA, \mathbb{R}^n_\Gamma, \beta)$ is given by multiples of a unitary element $U(\lambda) \in A(\lambda)$. The unitary $U(\lambda)$ is unique up to a phase and hence $U(\lambda_1) U(\lambda)$ is a multiple of $U(\lambda_1 + \lambda_2)$. Since the elements $(M^{-1})^T e_k$, $1 \le k \le n$, generate $\widehat{\R^n_\Gamma}$, the unitaries $U_1, \dots, U_n$ given by $U_k := U \bigl((M^{-1})^T e_k \bigr)$ generate the C\Star algebra~$\alg A$. We fix a real skew-symmetric $n\times n$-matrix $\theta'$ such that 
	\begin{equation*}
		U_k U_\ell = \exp\bigl(2\pi \imath \theta'_{k,\ell} \bigr) \, U_\ell U_k
	\end{equation*}
	for all $1 \le k, \ell \le n$. Then a standard computation shows that 
	\begin{equation}\label{eq:important eq}
		U(\lambda_1) U(\lambda_2) = \exp \Bigl(2\pi \imath \,  \scal{\lambda_1,  M \theta' M^T \lambda_2}  \Bigr) \, U(\lambda_2) U(\lambda_1)
	\end{equation}
	for all $\lambda_1, \lambda_2 \in (M^{-1})^T \cdot \mathbb{Z}^n$. Since $\beta$ is a lift of the gauge action, the unitaries $u_k := U(e_k)$ for $1 \le k \le n$ are canonical generators of $\cnTorus \subseteq \aA$. Therefore, we obtain
	\begin{align*}
		\MoveEqLeft
		\exp(2\pi \imath \theta_{k, \ell}) \, u_\ell u_k 
		= u_k u_\ell 
		= U(e_k) U(e_\ell)
		\\
		&= \exp\Bigl(2\pi \imath \, \scal{e_k, M \theta' M^T e_\ell} \Bigr) \, U(e_\ell) U(e_k) 
		= \exp \Bigl(2\pi \imath \, \scal{e_k, M \theta' M^T e_\ell} \Bigr) \, u_\ell u_k
	\end{align*}
	for all $1 \le k, \ell \le n$. It immediately follows that $M \theta' M^T \in \theta + M_n(\Z)$; in particular, $\theta'$ is quite irrational (see~\cite[Lem.~1.8]{Phi06}). As a consequence, $\cnTorus[\theta']$ is simple and therefore $\aA$ is isomorphic to~$\cnTorus[\theta']$. Moreover, a straightforward calculation verifies that we have $\beta_s = \gamma'_{M^{-1}s}$ for all~$s \in \R^n$.
\end{proof}

\begin{lemma} 	\label{lem:G_Abelian}
	Let $(\aA, G, \alpha)$ be a connected covering of $\cnTorus$. Let $(\aA, \mathbb{R}^n, \beta)$ be the corresponding lift of the gauge action and $\Gamma:=\ker(\beta)$. Then there is a group isomorphism $\phi:G \to \Z^n /\Gamma$ such that $\alpha_g = \beta_{\phi(g)}$ for all $g \in G$. 
\end{lemma}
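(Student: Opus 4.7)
The plan is to show that every $\alpha_g$ coincides with a translation $\beta_{t_g}$ for a uniquely determined class $[t_g] \in \Z^n/\Gamma$, and that the assignment $g \mapsto [t_g]$ is the required isomorphism. As a preliminary I note that $\beta$ being a lift of the gauge action forces $\Gamma \subseteq \Z^n$: any $s \in \Gamma$ acts trivially on $\cnTorus$, and the gauge action of $\R^n$ on $\cnTorus$ is faithful modulo $\Z^n$. Next, I would exploit that ergodicity of $\beta$ implies, by exactly the argument used at the start of the proof of Lemma~\ref{lem:gauge_lift}, that each isotypic component $A(\lambda)$ of the factored action of the compact group $\R^n_\Gamma$ is one-dimensional, spanned by a unitary $U(\lambda)$. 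Moreover, whenever $\lambda \in \Z^n$ the canonical generator $u_1^{\lambda_1}\cdots u_n^{\lambda_n}$ already lies in $A(\lambda)$, so by one-dimensionality $A(\lambda) \subseteq \cnTorus$.

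Since $\alpha_g$ commutes with $\beta$ it preserves these isotypic components, hence $\alpha_g(U(\lambda)) = c_g(\lambda) \, U(\lambda)$ for some scalar $c_g(\lambda) \in \mathbb T$. Comparing $U(\lambda_1) U(\lambda_2)$ with $U(\lambda_1 + \lambda_2)$ shows that $c_g$ is a character of $\widehat{\R^n_\Gamma}$, while $U(\lambda) \in \cnTorus = \aA^G$ for $\lambda \in \Z^n$ forces $c_g|_{\Z^n} \equiv 1$. Pontryagin duality applied to the short exact sequence $0 \to \Z^n \to \widehat{\R^n_\Gamma} \to \widehat{\R^n_\Gamma}/\Z^n \to 0$ identifies $\widehat{\R^n_\Gamma}/\Z^n$ with the dual of $\Z^n/\Gamma$ and yields a unique class $[t_g] \in \Z^n/\Gamma$ with $c_g(\lambda) = \exp(2\pi \imath \langle \lambda, t_g \rangle)$. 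Consequently $\alpha_g$ and $\beta_{t_g}$ agree on all $U(\lambda)$ and hence on all of $\aA$, and $\phi(g) := [t_g]$ is a homomorphism by virtue of $\alpha_{gh} = \alpha_g \alpha_h$.

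Injectivity of $\phi$ is immediate from faithfulness of $\alpha$: if some $\alpha_{g_0} = \id$ with $g_0 \ne e$, then the range of the Ellwood map would be contained in $\{f \in \Cont(G,\aA) : f(g_0) = f(e)\}$, contradicting its required density. The step I expect to be the main obstacle is surjectivity. Here one must use crucially that $\aA^G$ \emph{equals} $\cnTorus$, not merely contains it: any $U(\lambda)$ fixed by every $\alpha_g$ belongs to $\cnTorus$ and therefore has $\lambda \in \Z^n$. Equivalently, the family $\{c_g\}_{g \in G}$ separates points of $\widehat{\R^n_\Gamma}/\Z^n \cong \widehat{\Z^n/\Gamma}$. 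Since the $[t_g]$ are precisely the elements of $\Z^n/\Gamma$ inducing these characters, and a subgroup of a finite abelian group can separate points of its dual only if it is the whole group, this forces $\phi(G) = \Z^n/\Gamma$, completing the argument.
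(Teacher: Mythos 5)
Your proof is correct, and it is essentially the Pontryagin-dual rendering of the paper's argument, with genuinely different mechanics at the two key steps. The paper constructs the map in the opposite direction: it sends each $\lambda \in \widehat{\R^n_\Gamma}$ to the character $\pi_\lambda \in \hat G$ by which $G$ acts on the line $A_\beta(\lambda)$, proves this map $\psi$ is \emph{surjective} by invoking freeness of $\alpha$ through the nonvanishing of every isotypic component $A_\alpha(\pi)$ (each such component is $\beta$-invariant, hence meets some $A_\beta(\lambda)$) --- which simultaneously shows all irreducible representations of $G$ are one-dimensional, i.e.\ $G$ is Abelian --- then computes $\ker\psi = \Z^n$ and dualizes the induced isomorphism $\widehat{\R^n_\Gamma}/\Z^n \to \hat G$ to obtain $\phi$. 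Your characters $c_g$ are exactly $c_g(\lambda) = \psi(\lambda)(g)$, so the two constructions yield literally the same $\phi$; but you spend freeness of $\alpha$ differently, namely through faithfulness ($\ker\alpha$ trivial, via your Ellwood-range argument, which reproves part (1) of Lemma~\ref{lem:injective}) to get \emph{injectivity} of $\phi$, you get Abelianness of $G$ for free from the embedding into $\Z^n/\Gamma$ rather than having to establish it first, and you prove \emph{surjectivity} by the point-separation argument resting on the equality $\aA^G = \cnTorus$ --- which is precisely the dual of the paper's computation $\ker\psi = \Z^n$. Your write-up also makes explicit two points the paper leaves implicit: that $\Gamma \subseteq \Z^n$ (needed for $\Z^n/\Gamma$ to make sense at all) and that $A_\beta(\lambda) \subseteq \cnTorus$ for $\lambda \in \Z^n$ by one-dimensionality. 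The paper's direction is slightly more economical because surjectivity of $\psi$ delivers injectivity of $\phi$ and Abelianness in one stroke; yours isolates more cleanly which hypothesis (freeness of $\alpha$ versus exactness of the fixed-point algebra) is responsible for which property of $\phi$.
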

\begin{proof}
	We regard $\beta$ as an action of the compact group $\R^n_\Gamma$ (see Remark~\ref{rmk:compact_gauge}). For every $\pi \in \hat G$ and $\lambda \in \widehat{\R^n_\Gamma}$ let $A_\alpha(\pi)$ and $A_\beta(\lambda)$ denote the corresponding isotypic components with respect to $\alpha$ and $\beta$, respectively. Since $\beta$ is free and ergodic, each $A_\beta(\lambda)$ is a 1\ndash dimensional Hilbert space with the inner product given by $\scal{x,y} := x^* y$ for \mbox{$x,y \in A_\beta(\lambda)$}. Since $\alpha$ and $\beta$ commute, each $A_\beta(\lambda)$ is $\alpha$-invariant and hence provides a 1\ndash dimensional representation of~$G$. We write $\pi_\lambda \in \hat G$ for the corresponding class. It is straightforward to check that the resulting map
	\begin{equation*}
		\psi: \widehat{\R^n_\Gamma} \to \hat G,
		\quad
		\psi(\lambda) := \pi_\lambda
	\end{equation*}
	is multiplicative, \ie, $\psi(\lambda_1 + \lambda_2) = \psi(\lambda_1) \tensor \psi(\lambda_2)$ for all $\lambda_1, \lambda_2 \in \widehat{\R^n_\Gamma}$. Furthermore, freeness of $(\aA, G, \alpha)$ implies that each $A_\alpha(\pi)$, $\pi \in \hat G$, is non-zero. Since $A_\alpha(\pi)$ is $\beta$\ndash invariant, there is a non-trivial intersection $A_\alpha(\pi) \cap A_\beta(\lambda)$ for some $\lambda \in \widehat{\R^n_\Gamma}$. Thus $\psi$ is surjective. It follows that all irreducible representations of $G$ are 1-dimensionsal, that is, $G$ is Abelian. The kernel of $\psi$ consist of those elements $\lambda \in \widehat{\R^n_\Gamma}$ such that $A_\beta(\lambda)$ intersects the $\alpha$-fixed point algebra $\cnTorus$ non-trivially, that is, we have $\ker \psi = \Z^n$. Consequently, the homomorphism~$\psi$ factors to an isomorphism $\overline \psi: \widehat{\R^n_\Gamma} / \Z^n \to \hat G$. Passing to the corresponding dual groups, we obtain an isomorphism $\phi: G \to \Z^n / \Gamma$. Next, let $\lambda \in \widehat{\R^n_\Gamma}$. Then an easy computation shows that the action of $\alpha$ on $A_\beta(\lambda)$ is given by
	\begin{equation*}
		\alpha_g(x) = \pi_\lambda(g) x 
		= \phi(\lambda)(g) x
		= \lambda \bigl( \phi(g) \bigr) x
		= \beta_{\phi(g)}(x)
	\end{equation*}
	for all $x \in A_\beta(\lambda)$ and $g \in G$. Since the subspaces $A_\beta(\lambda)$, $\lambda \in \widehat{\R^n_\Gamma}$, are total in $\aA$, we finally conclude that $\alpha_g = \beta_{\phi(g)}$ for all $g \in G$.
\end{proof}

\begin{rmk}
	\begin{enumerate}
		\item
			The group $G$ in Lemma~\ref{lem:G_Abelian} can in fact be replaced by any compact quantum group. The definitions can be extended straightforwardly to this case and for the proof only a slight adaption of the arguments is needed.
		\item
			A similar conclusion as in Lemma~\ref{lem:G_Abelian} may be derived from Peligrads duality result for free actions of compact groups, \cite[Thm. 3.3]{Pel92}.
	\end{enumerate}
\end{rmk}

We are now ready to state and prove our main theorem. From a geometric point of view, the upshot is that the coverings of the quantum torus $\cnTorus$ are of a similar form as for the classical coverings of the torus $\mathbb{T}^n$. 

\begin{thm}\label{thm:main thm 2}
	Let $\theta$ be quite irrational. Then a covering $(\aA, G, \alpha)$ of $\cnTorus$ is connected if and only if it is equivalent to $(\cnTorus[\theta'] , (M^{-1} \cdot \mathbb{Z}^n) / \mathbb{Z}^n , \gamma')$ for some invertible $n \times n$-matrix $M$ with integer valued entries, some skew-symmetric $n\times n$-matrix $\theta'$ satisfying equation~\eqref{eq:theta_relation}, and the gauge action $\gamma'$ on $\cnTorus[\theta']$.
\end{thm}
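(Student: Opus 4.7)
The plan is to combine Lemmas~\ref{lem:gauge_lift} and~\ref{lem:G_Abelian} for the ``only if'' direction and to construct the covering explicitly for the ``if'' direction, the decisive step being the identification of the fixed algebra of a finite subgroup of the gauge torus on $\cnTorus[\theta']$. For the ``only if'' direction I would let $(\aA, G, \alpha)$ be a connected covering with witnessing ergodic lift $(\aA, \R^n, \beta)$ and set $\Gamma := \ker\beta$. Since $\beta|_{\cnTorus} = \gamma$, we have $\Gamma \subseteq \ker\gamma = \Z^n$, so $\Gamma = M\cdot\Z^n$ for some invertible integer matrix $M$. Applying Lemma~\ref{lem:gauge_lift} gives a skew-symmetric $\theta'$ satisfying~\eqref{eq:theta_relation} together with an equivalence $\aA \cong \cnTorus[\theta']$ turning $\beta_s$ into $\gamma'_{M^{-1}s}$; Lemma~\ref{lem:G_Abelian} then furnishes an isomorphism $\phi\colon G \to \Z^n/M\Z^n$ with $\alpha_g = \beta_{\phi(g)} = \gamma'_{M^{-1}\phi(g)}$. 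The map $g \mapsto M^{-1}\phi(g) + \Z^n$ identifies $G$ with $H := (M^{-1}\Z^n)/\Z^n \leq \mathbb{T}^n$ and produces the claimed equivalence.

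For the converse, I would fix data $(M, \theta')$ satisfying~\eqref{eq:theta_relation}, set $H := (M^{-1}\Z^n)/\Z^n$, and study $\gamma'|_H$ on $\cnTorus[\theta']$. Quite irrationality of $\theta$ together with~\eqref{eq:theta_relation} forces $\theta'$ to be quite irrational too (\cite[Lem.~1.8]{Phi06}), so $\cnTorus[\theta']$ is simple. Freeness of $(\cnTorus[\theta'], \mathbb{T}^n, \gamma')$ descends to $H$ because the Ellwood map for $H$ factors as the Ellwood map for $\mathbb{T}^n$ followed by the (surjective, by Tietze extension) restriction $\Cont(\mathbb{T}^n, \cnTorus[\theta']) \twoheadrightarrow \Cont(H, \cnTorus[\theta'])$. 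By Pontryagin duality, the characters of $\mathbb{T}^n$ trivial on $H$ are exactly $M^T\Z^n$, so the fixed algebra $\cnTorus[\theta']^H$ is generated by unitaries $v_k \in \cnTorus[\theta'](M^T e_k)$ for $k=1,\dots,n$.

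The decisive step will be identifying this fixed algebra with $\cnTorus$. A short calculation using the generator relations of $\cnTorus[\theta']$ gives $v_k v_\ell = \exp\!\bigl(2\pi\imath\,(M\theta' M^T)_{k\ell}\bigr)\, v_\ell v_k$, which by~\eqref{eq:theta_relation} collapses to $\exp(2\pi\imath\theta_{k\ell})\, v_\ell v_k$. The universal property of $\cnTorus$ then yields a surjective $*$-homomorphism $\cnTorus \twoheadrightarrow \cnTorus[\theta']^H$, and simplicity of $\cnTorus$ promotes it to an isomorphism. Finally, the reparametrized $\R^n$-action $\tilde\beta_s := \gamma'_{M^{-1}s}$ is an ergodic lift of the gauge action on $\cnTorus$ and commutes with $\gamma'|_H$ (both being subactions of the abelian $\gamma'$), witnessing connectedness. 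The main obstacle I anticipate is this identification of the fixed algebra via simplicity; the rest reduces to bookkeeping on top of the two preceding lemmas.
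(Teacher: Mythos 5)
Your proposal is correct and follows essentially the same route as the paper: the ``only if'' direction combines Lemma~\ref{lem:gauge_lift} and Lemma~\ref{lem:G_Abelian}, and the ``if'' direction identifies the fixed point algebra $(\cnTorus[\theta'])^{(M^{-1} \cdot \Z^n)/\Z^n}$ with $\cnTorus$ by showing it is generated by the unitaries $U(M^T e_k)$, whose commutation relations collapse via equation~\eqref{eq:theta_relation} to those of $\cnTorus$, and then invoking simplicity of $\cnTorus$. Your additional details --- freeness of the restricted action via factoring the Ellwood map, and the reparametrized lift $\tilde\beta_s = \gamma'_{M^{-1}s}$ (which is indeed the correct lift, stated slightly loosely in the paper as $\gamma'$ itself) --- merely spell out what the paper leaves as ``easily checked.''
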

\begin{proof}
	Lemma~\ref{lem:gauge_lift} and Lemma \ref{lem:G_Abelian} together imply that every connected covering is of the asserted form. For the converse let $M$ be an invertible $n \times n$-matrix with integer valued entries and $\theta'$ a skew-symmetric $n\times n$-matrix satisfying equation~(\ref{eq:theta_relation}). Then it is easily checked that $(\cnTorus[\theta'], (M^{-1} \cdot \Z^n) / \Z^n, \gamma')$ is a connected covering of $\cnTorus$ with an ergodic lift of the gauge action given by $(\cnTorus[\theta'], \R^n, \gamma')$, where $\gamma'$ denotes the canonical gauge action on $\cnTorus[\theta']$. In fact, the only missing condition is that the fixed point algebra 
	\begin{align*}
		(\cnTorus[\theta'])^{M^{-1} \cdot \mathbb{Z}^n}
		= \{x \in \mathbb{A}^n_{\theta'}  \;|\; \gamma'_g(x)=x \quad \forall~g \in M^{-1} \cdot \mathbb{Z}^n\}
	\end{align*}
	can be identified with $\cnTorus$. For this let $U_1,\ldots,U_n$ be unitary generators of $\mathbb{A}^n_{\theta'}$ subject to the relations $U_kU_\ell=\exp(2\pi \imath \theta'_{k\ell}) U_\ell U_k$ for all $1\leq k,\ell\leq n$. Then a few moments thought show that $(\cnTorus[\theta'])^{M^{-1} \cdot \mathbb{Z}^n}$ is generated by the unitaries
	\begin{equation*}
		u_k := U(M^T e_k):=U_1^{m_{1,k}} \cdots U_n^{m_{n,k}} \in \mathbb A^n_{\theta'}(M^T e_k), \qquad 1 \leq k \leq n,
	\end{equation*}
	where $M = (m_{i,j})_{1 \le i,j \le n}$. The claim therefore follows from
	\begin{align*}
		\MoveEqLeft
		u_k u_\ell=U(M^T e_k) U(M^T e_\ell)=\exp \Bigl(2\pi \imath \,  \scal{e_k,  \bigl(M \theta' M^T \bigr) e_\ell}  \Bigr) \, U(M^T e_\ell) U(M^T e_k)
		\\
		&\overset{\eqref{eq:theta_relation}}= 		
		\exp \Bigl(2\pi \imath \, \scal{e_k, \theta e_\ell}  \Bigr) \, U(M^T e_\ell) U(M^T e_k)=\exp(2\pi \imath \theta_{k, \ell}) \, u_\ell u_k 	
	\end{align*}
	for all $1 \le k, \ell \le n$ and the simplicity of $\cnTorus$.
\end{proof}

\begin{rmk}
	\begin{enumerate}
	\item
		Since equation~\eqref{eq:theta_relation} admits multiple solutions for a given matrix $M$ the covering theory of $\cnTorus$ is more involved than for the classical coverings of~$\mathbb T^n$. This is not unexcepted since also $\mathbb T^n$ admits noncommutative coverings by quantum tori $\cnTorus$ for $\theta \in M_n(\mathbb{Q})$.
	\item 
		Let $\theta$ be quite irrational. Then by Theorem~\ref{thm:main thm 2} all possible structure groups of connected coverings of $\cnTorus$ are, up to isomorphism, of the form $\Z^n/\Gamma$, where $\Gamma$ for some subgroups $\Gamma \le \Z^n$ of maximal rank. The subgroups $\Gamma$ are partially ordered by inclusion. We define $\pi^n_\theta$ to be the inverse limit of the corresponding quotients $\Z^n / \Gamma$. Then it is not hard to see that $\pi^n_\theta$ is in fact the profinite completion of~$\Z^n$. This limit  may be regarded as a possible fundamental group for $\cnTorus$ common in algebraic geometry (\cf \cite[Sec.~3]{Mil08} for the characterization of the fundamental group of a connected variety via finite \'etale coverings). 
	\end{enumerate}
\end{rmk}

\pagebreak[3]
\section{Smooth Coverings of $\cTorus$ }\label{sec:smooth abelian covering}

In the remaining part of this article we investigate noncommutative coverings of generic irrational quantum 2-tori that are not necessarily connected in the sense of Definition~\ref{def:lift+connectedness}. For this purpose, let throughout the following $\theta$ be a fixed irrational and non-quadratic number. Then $\cTorus$ is simple and, according to \cite{Ko97}, the Picard group $\Pic(\cTorus)$ equals its subgroup $\Out(\cTorus)$ of outer automorphisms of $\cTorus$. That is, given a covering $(\aA,G,\alpha)$ of $\cTorus$ with compact Abelian group $G$, every isotypic component $A(\chi)$, $\chi \in \hat G$, contains a unitary element $u(\chi)$ (see \cite[Rem.~5.15]{SchWa15} or \cite[Thm.~4.6]{SchWa17}). The Picard homomorphism of $(\aA, G, \alpha)$ then reads as
\begin{align*}
\varphi_\aA: \hat{G} \to \Out(\cTorus), \qquad \varphi_\aA(\chi):=\bigl[\Ad[u(\chi)]_{\mid \cTorus}\bigr],
\end{align*}
where $\Ad[u(\chi)](x):=u(\chi) x u^{*}(\chi)$ for all $x \in \aA$. Conversely, given a compact Abelian group $G$ and a group homomorphism $\varphi: \hat{G} \to \Out(\cTorus)$, one may ask whether there is a free C\Star dynamical system $(\aA,G,\alpha)$ with fixed point algebra $\cTorus$ and Picard homomorphism $\varphi_\aA = \varphi$. In light of \cite[Thm. 5.15]{SchWa15}, such a C\Star dynamical system exists if and only if a certain cohomology class associated to $\varphi$ vanishes in $\text{H}^3(\hat G, \mathbb T)$. In this case, all C\Star dynamical systems with fixed point algebra $\cTorus$ and Picard homomorphism $\varphi$ are parametrized, up to isomorphism, by $\text{H}^2(\hat G, \mathbb T)$ (see \cite[Thm.~5.8 and Cor.~5.9]{SchWa15}).

At this point it is instructive to take a closer look at the class of connected coverings of $\cTorus$. By Theorem~\ref{thm:main thm 2} each connected covering of $\cTorus$ is of the form $(\cTorus[\theta'], G, \gamma')$ for some $2\times2$-matrix $\theta'$ satisfying equation~\eqref{eq:theta_relation} and some compact Abelian group $G$ acting via gauge transformations. Each isotypic component $A'(\chi)$, $\chi \in \hat G$, is contained in the smooth quantum torus $\sTorus[\theta']$. Therefore, for any unitary element $u(\chi) \in A'(\chi)$ the automorphism $\Ad[u(\chi)]$ preserves the smooth quantum torus $\sTorus =  \sTorus[\theta'] \cap \cTorus$. That is, the Picard homomorphism maps into the group of smooth outer automorphisms $\Out^\infty(\cTorus)$.

\begin{defn}\label{def:smooth covering}
	Let $G$ be a finite Abelian group. We say that a covering $(\aA,G,\alpha)$ of~$\cTorus$ is \emph{smooth} if the corresponding Picard homomorphism $\varphi_\aA: \hat{G} \to \Out(\cTorus)$ takes values in the smooth outer automorphisms~$\Out^\infty(\cTorus)$.
\end{defn}

As we have just noticed, each connected covering is smooth. It is therefore natural to ask, whether there are smooth coverings of $\cTorus$ that are not connected. The purpose of this section is to provide an affirmative answer to this question. More precisely, given any finite Abelian group $G$ and group homomorphism $\varphi:\hat G \to \Out^\infty(\cTorus)$, we give a generic construction of a covering of $\cTorus$ with Picard homomorphism $\varphi$.

We begin our study with the following observation, whose proof we leave to the reader.

\begin{lemma} \label{Bimodule structure}
	Let $\alg A \subseteq \mathcal B(\mathcal K)$ be a concrete C\Star algebra and let $u\in \mathcal{U}(\mathcal K)$ such that $\Ad[u]$ restricts to a \Star automorphism of~$\alg A$. Then the following definitions turn $ \alg A u$ into a Morita self-equivalence over~$\alg A$:
	\begin{tabenum}[(i)]
	\item
		$a.xu:=axu$ for $a,x \in \alg A$, 
	\item
		$xu.a := xua = x\Ad[u](a)u$ for $a,x \in \alg A$,
	\\
	\item
		${}_{\alg A}\langle xu,yu\rangle:=xy^*$ for $x,y \in \aA$, 
	\item
		$\langle xu,yu\rangle_{\aA}:=\Ad[u]^*(x^*y)$ for $x,y \in \alg A$.
	\end{tabenum}
\end{lemma}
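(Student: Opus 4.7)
The plan is to recognise $\alg A u$ as a concrete incarnation of the abstract Morita self-equivalence $M_\alpha$ constructed in the paragraph on the Picard group, with $\alpha := \Ad[u]_{\mid \alg A}$. The key observation is that the map $\Psi \colon M_\alpha \to \alg A u$ defined by $\Psi(x) := xu$ is an isometric linear bijection, since $u \in \mathcal B(\mathcal K)$ is unitary. A direct computation shows that $\Psi$ intertwines the left action, the right action, and both $\alg A$-valued inner products of $M_\alpha$ with the structures (i)--(iv) given here; for instance $\Psi(x) \cdot a = xua = x\alpha(a)u = \Psi(x \cdot a)$ and $\langle \Psi(x), \Psi(y)\rangle_{\alg A} = u^* x^* y\, u = \alpha^{-1}(x^*y)$. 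The lemma therefore reduces to the already recalled fact that $M_\alpha$ is a Morita self-equivalence over $\alg A$.

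Should a direct verification be preferred, I would proceed axiom by axiom. First, well-definedness: the right action lands in $\alg A u$ because $\Ad[u](a) \in \alg A$, and the right inner product lands in $\alg A$ because $\Ad[u]^* = \Ad[u^*]$ is the inverse automorphism, which likewise preserves $\alg A$. The bimodule axioms -- associativity of both actions and the commutativity identity $(a \cdot xu) \cdot b = a \cdot (xu \cdot b)$ -- follow from associativity of multiplication in $\mathcal B(\mathcal K)$. Conjugate symmetry and positivity of the inner products are immediate: $(xy^*)^* = yx^*$ with $xx^* \geq 0$, and $(u^*x^*yu)^* = u^*y^*xu$ with $u^*x^*x\, u \geq 0$.

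The one slightly more substantive step is the compatibility relation ${}_{\alg A}\langle xu, yu\rangle \cdot zu = xu \cdot \langle yu, zu\rangle_{\alg A}$. By definition the left-hand side equals $xy^* \cdot zu = xy^*zu$, while the right-hand side is $xu \cdot u^*y^*zu = x(uu^*)y^*zu = xy^*zu$, the unitarity of $u$ making the two middle factors cancel at precisely the right place. Fullness of both inner products follows from the standard fact that $\lin\{xy^* : x, y \in \alg A\}$ is norm-dense in $\alg A$ (an approximate-unit argument in the C\Star algebra $\alg A$), combined with the observation that $\Ad[u]^*$ is a \Star automorphism and hence preserves this density.

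Altogether the proof is essentially bookkeeping and presents no genuine obstacle; the only decisive piece of structure is the unitarity of $u$, which is what makes the twist by $\alpha$ compatible with the Hilbert-module structure.
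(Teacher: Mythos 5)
Your proof is correct: both the reduction of $\alg A u$ to the abstract module $M_{\Ad[u]}$ via the isometric bijection $x \mapsto xu$ and the direct axiom-by-axiom verification go through, and the key computations (the intertwining of the right structures and the cancellation $uu^* = \one$ in the compatibility relation) are exactly right. The paper itself leaves this lemma's proof to the reader, so there is nothing to compare line by line, but your identification with $M_\alpha$ is precisely the route suggested by the paper's preliminaries on the Picard group, where the same formulas for $M_\alpha$ are recalled.
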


Given a \Star algebra $A$, a group $H$, and for each $h\in H$ a linear subspace $A_h$ of $A$ such that $A = \sum_{h \in H} A_h$. We say that the $A$ is a \emph{weakly $H$-graded} \Star algebra if $A_{h_1} \cdot A_{h_2} \subseteq A_{h_1 h_2}$ and $A_h^*=A_{h^{-1}}$ for all $h_1, h_2, h \in H$. Notice that we do not require the sum to be direct.

\pagebreak[3]
\begin{lemma}	\label{lem:alg_vs_delta}
	Let $\alg A_1 \subseteq \mathcal B(\hilb K)$ be a C\Star algebra with $\one_{\hilb K} \in \alg A_1$, let $H$ be a group, and let $u:H \to \U(\hilb K)$ be a map with $u(1) = \one_{\hilb K}$ such that $u(h) \alg A_1 = \alg A_1 u(h)$ for all $h\in H$. Then the following statements are equivalent:
	\begin{enumerate}[label=(\alph*)]
	\item	\label{en:alg_unitary:alg}
		The set $\alg A := \sum_{h \in H} \alg A_1 \, u(h)  \subseteq \alg B(\hilb K)$ is a weakly $H$-graded \Star algebra. 
	\item	\label{en:alg_unitary:uni}
		The element $\delta u(h_1, h_2):=u(h_1) u(h_2) u(h_1 h_2)^*$ lies in $\alg A_1$ for all $h_1, h_2 \in H$.
	\end{enumerate}
\end{lemma}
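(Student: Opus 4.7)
The plan is to prove both implications directly from the defining commutation property $u(h) \alg A_1 = \alg A_1 u(h)$. Writing $\alg A_h := \alg A_1 u(h)$, the assertion that $\alg A = \sum_{h \in H} \alg A_h$ is a weakly $H$-graded \Star algebra amounts to the two inclusions $\alg A_{h_1} \cdot \alg A_{h_2} \subseteq \alg A_{h_1 h_2}$ and $\alg A_h^* = \alg A_{h^{-1}}$ for all $h_1, h_2, h \in H$; these are what I must produce from~(b), and conversely what I can feed into~(b).

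For the direction \ref{en:alg_unitary:uni}$\Rightarrow$\ref{en:alg_unitary:alg} I first verify the product inclusion. Given arbitrary $x u(h_1) \in \alg A_{h_1}$ and $y u(h_2) \in \alg A_{h_2}$, the commutation hypothesis supplies some $y' \in \alg A_1$ with $u(h_1) y = y' u(h_1)$, so that
\begin{equation*}
	x u(h_1) \cdot y u(h_2) \;=\; x y' \, u(h_1) u(h_2) \;=\; x y' \, \delta u(h_1, h_2) \, u(h_1 h_2),
\end{equation*}
which lies in $\alg A_1 u(h_1 h_2) = \alg A_{h_1 h_2}$ precisely because $\delta u(h_1, h_2) \in \alg A_1$ by hypothesis. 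For the \Star symmetry I apply (b) at the pair $(h, h^{-1})$: since $u(1) = \one_{\hilb K}$, the element $c := u(h) u(h^{-1}) = \delta u(h, h^{-1})$ lies in $\alg A_1$, and being a product of two unitaries in $\End(\hilb K)$ it is unitary in $\End(\hilb K)$, hence a unitary of $\alg A_1$. From $u(h) u(h^{-1}) = c$ I read off $u(h)^* = u(h^{-1}) c^*$, whence $(x u(h))^* = u(h^{-1}) c^* x^*$; a final application of $u(h^{-1}) \alg A_1 = \alg A_1 u(h^{-1})$ moves $u(h^{-1})$ to the right and yields membership in $\alg A_1 u(h^{-1}) = \alg A_{h^{-1}}$. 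Together with the obvious closure under addition and scalar multiplication, this makes $\alg A$ a weakly $H$-graded \Star algebra.

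The reverse direction \ref{en:alg_unitary:alg}$\Rightarrow$\ref{en:alg_unitary:uni} is immediate: specializing the product inclusion $\alg A_{h_1} \cdot \alg A_{h_2} \subseteq \alg A_{h_1 h_2}$ to the elements $\one_{\hilb K} \cdot u(h_1)$ and $\one_{\hilb K} \cdot u(h_2)$ yields $u(h_1) u(h_2) \in \alg A_1 u(h_1 h_2)$, so that $u(h_1) u(h_2) = a \, u(h_1 h_2)$ for some $a \in \alg A_1$, and right-multiplying by $u(h_1 h_2)^*$ gives $\delta u(h_1, h_2) = a \in \alg A_1$.

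No essential obstacle lies in the argument; it is careful bookkeeping with the commutation relation. The only point demanding a moment's thought is the involution step, where one must first extract from (b) not merely that $u(h) u(h^{-1}) \in \alg A_1$ but that this product is a \emph{unitary} of $\alg A_1$, so that it can be inverted to re-express $u(h)^*$ as a product of $u(h^{-1})$ with an element of $\alg A_1$. Because the sum $\sum_{h \in H} \alg A_h$ is not assumed to be direct, no well-definedness issue arises either—the claims concern only closure of the sum under the algebraic operations.
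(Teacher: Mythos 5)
Your proposal is correct and follows essentially the same route as the paper: the direction \ref{en:alg_unitary:alg}$\Rightarrow$\ref{en:alg_unitary:uni} is verbatim the paper's argument, and for the converse the paper likewise combines the commutation relation $u(h)\alg A_1 = \alg A_1 u(h)$ with the identity expressing $u(h)^*$ as an element of $\alg A_1$ times $u(h^{-1})$ via $\delta u(h^{-1},h)$ (you use $\delta u(h,h^{-1})$ instead, a trivial variation). Your write-up is in fact slightly more explicit than the paper's, which leaves the multiplicative part of the grading as ``immediate.''
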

\begin{proof}
	If condition~\ref{en:alg_unitary:alg} holds, then for all $h_1, h_2 \in H$ the product $u(h_1)u(h_2)$ lies in $ \alg A_1 u(h_1h_2)$. Therefore, $\delta u(h_1,h_2) \in \alg A_1 u(h_1,h_2) u(h_1,h_2)^* = \alg A_1$, which verifies condition~\ref{en:alg_unitary:uni}. Conversely, if condition \ref{en:alg_unitary:uni} holds, it immediate that $\alg A$ is an algebra and that the multiplicative part of the grading condition holds. Since $u(1) = \one_{\hilb K}$, we further have $u(h)^* = \delta u(h^{-1}, h) u(h^{-1}) \in \alg A_1 u(h^{-1})$ for all $h \in H$. Hence, the algebra~$\alg A$ is closed under involution and the involutive part of the grading condition holds, which completes condition~\ref{en:alg_unitary:alg}.
\end{proof}

We continue with a general statement that provides a criterion for deciding whether the subspaces in Lemma~\ref{lem:alg_vs_delta} is direct, which may also be of independent interest.

\pagebreak[3]
\begin{lemma} 	\label{lem:sum direct}
	Let $\alg A_1 \subseteq \alg B(\hilb K)$ be an irreducible representation of a simple C\Star algebra with $\one_{\hilb K} \in \alg A_1$. Furthermore, let $H$ be a group and let $u:H \to \U(\hilb K)$ be a map with $u(1) = \one_{\hilb K}$ satisfying the following conditions:
	\begin{enumerate}
	\item	\label{en:lin_indep:com}
		For all $h \in H$ we have $u(h) \alg A_1 = \alg A_1 u(h)$.
	\item	\label{en:lin_indep:delta}
		For all $h_1,h_2 \in H$ we have $\delta u(h_1,h_2):=u(h_1) u(h_2) u(h_1 h_2)^* \in \alg A_1$.
	\item	\label{en:lin_indep:inner}
		For every $1 \neq h \in H$ the \Star automorphism $\alpha_h:=\Ad[u(h)]_{\mid \aA_1}$ of $\aA_1$ is not inner.
	\end{enumerate}
	Then the sum of subspaces $\sum_{h \in H} \alg A_1 u(h) \subseteq \alg B(\hilb K)$ is direct.
\end{lemma}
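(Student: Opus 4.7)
The plan is to argue by contradiction: suppose there is a nontrivial relation $\sum_{h\in F} a_h u(h) = 0$ with $|F|$ minimal and every $a_h\neq 0$. First a normalization step: given $h_0\in F$, right-multiplication by $u(h_0)^*$ turns the relation into a new one of the same length whose support contains~$1$, and condition~\ref{en:lin_indep:delta} together with the identities $u(h_0)^* = \delta u(h_0^{-1},h_0)^{-1}u(h_0^{-1})$ and $u(h)u(h_0^{-1}) = \delta u(h,h_0^{-1})u(hh_0^{-1})$ ensures the new coefficients still lie in $\aA_1$. The case $|F|=1$ forces $a_1=0$ immediately, so henceforth $|F|\geq 2$ with $1\in F$ and every $a_h\neq 0$.

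The heart of the argument is to upgrade this to a relation with $a_1 = \one$. Consider
\[
    V_F := \bigl\{(b_h)_{h\in F}\in\aA_1^F : \sum_h b_h u(h)=0 \bigr\},
\]
viewed as an $\aA_1$-bimodule via the left action $b\cdot(b_h)_h := (bb_h)_h$ and the twisted right action $(b_h)_h\cdot b := (b_h\,\alpha_h(b))_h$, where $\alpha_h := \Ad[u(h)]|_{\aA_1}$ is well-defined by~\ref{en:lin_indep:com}. The $1$-coordinate projection $\pi: V_F\to\aA_1$ is injective by minimality of $|F|$ (a vanishing $1$-coordinate would give a strictly shorter relation) and is clearly a bimodule map, so its image $V_{F,1}$ is a nonzero two-sided algebraic ideal of $\aA_1$. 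Since $\aA_1$ is unital and simple, every such ideal must be all of $\aA_1$: for any nonzero $x\in\aA_1$ the closed ideal $\overline{\aA_1 x^*x\aA_1}$ equals $\aA_1$ by simplicity, hence some finite sum $\sum_i y_ix^*xz_i$ lies within norm distance less than~$1$ of $\one$, is therefore invertible, and this places $\one$ in the algebraic ideal $\aA_1 x^*x\aA_1$. Consequently $V_{F,1}=\aA_1$, so one can pick $(a'_h)\in V_F$ with $a'_1=\one$; minimality then forces $(a'_h)$ to have full support~$F$.

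A commutator trick finishes the argument. For any $b\in\aA_1$, left- and right-multiplying the normalized relation by $b$ and subtracting gives
\[
    \sum_{h\in F} \bigl(ba'_h - a'_h\alpha_h(b)\bigr)\,u(h) = 0,
\]
whose $h=1$ coefficient is $[b,\one]=0$. By minimality every remaining coefficient must vanish, so $ba'_h = a'_h\alpha_h(b)$ for every $b\in\aA_1$ and every $h\in F\setminus\{1\}$. Taking adjoints yields the companion identity $(a'_h)^*b = \alpha_h(b)(a'_h)^*$, and a short calculation then shows that both $a'_h(a'_h)^*$ and $(a'_h)^*a'_h$ commute with all of $\aA_1$. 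Since the center of the simple unital algebra $\aA_1$ is $\C\one$, both are positive scalars with equal norm, so after rescaling $a'_h$ becomes a unitary $v\in\aA_1$ implementing $\alpha_h = \Ad[v^*]|_{\aA_1}$. This contradicts condition~\ref{en:lin_indep:inner}.

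The main obstacle I anticipate is the ideal step in the second paragraph: the algebraic ideal $V_{F,1}$ is not a priori norm-closed, so simplicity of $\aA_1$ as a C\Star algebra alone does not suffice --- unitality has to be exploited through the invertibility argument above. The remainder is the standard outer-action dichotomy, recast here in the twisted-multiplication setting induced by the cocycle $\delta u$.
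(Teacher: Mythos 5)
Your proof is correct, and its skeleton coincides with the paper's: normalize a hypothetical relation so that it has a coefficient at the group identity (right multiplication by $u(h_0)^*$ plus the cocycle identities), use simplicity to produce a relation whose identity coefficient is $\one_{\hilb K}$, and then run the commutator trick together with irreducibility to contradict the outerness hypothesis. The genuine difference lies in the ideal step, and it is in your favor. The paper argues by induction on the number of summands and considers $I := \alg A_1 \cap \bigl( \sum_{k=1}^n \alg A_1 u(h_k) \bigr)$, asserting that $I$ is \emph{obviously} a closed right ideal, so that simplicity --- absence of nontrivial \emph{closed} two-sided ideals --- yields the dichotomy $I = \{0\}$ or $I = \alg A_1$. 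That closedness claim is not actually obvious: an algebraic direct sum of finitely many closed subspaces of a Banach space need not be closed, hence neither need $I$ be. Your version avoids the issue entirely: the set of admissible identity coefficients (the image of your bimodule projection $\pi$) is only an algebraic two-sided ideal, and you correctly upgrade ``nonzero algebraic ideal'' to ``contains $\one$'' via the invertibility argument --- this is precisely the standard fact that simple unital C\Star algebras are algebraically simple, and it is exactly the repair the paper's argument needs. Your minimal-counterexample bookkeeping is equivalent to the paper's induction, and your endgame (centrality of $a'_h (a'_h)^*$ and $(a'_h)^* a'_h$, producing a unitary $v \in \alg A_1$ with $\alpha_h = \Ad[v^*]$) reaches the same contradiction as the paper's marginally shorter one, in which irreducibility makes $x_k u(h_k)$ a nonzero scalar, so that $u(h_k)$ is a multiple of $x_k^* \in \alg A_1$ and $\Ad[u(h_k)]$ is inner. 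One cosmetic remark: in your normalization step the passage of $\delta u(h_0^{-1},h_0)^{-1}$ across $u(h)$ uses the commutation hypothesis $u(h)\alg A_1 = \alg A_1 u(h)$, not only the cocycle hypothesis, so you should cite both (alternatively, the single identity $u(h)u(h_0)^* = \delta u(h h_0^{-1}, h_0)^{*}\, u(h h_0^{-1})$ gives the coefficient directly).
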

\begin{proof}
	We proof by induction that the sum of an arbitrary number of summands is direct, where the initial case of a single summand is trivial. For the induction let $n \in \N$ and let us assume that any collection of $n$ summands is direct. A brief computation shows that hypothesis~\ref{en:lin_indep:com} and \ref{en:lin_indep:delta} imply $u(h)^* \in  \alg A_1 u(h^{-1})$ and $u(h_1) u(h_2) \in \alg A_1 u(h_1 h_2)$ for all $h,h_1,h_2 \in H$. It therefore suffices to show that for any number of distinct elements $h_1, \dots, h_n \in H \setminus \{1\}$ we have $I := \alg A_1 \cap \bigl( \sum_{k=1}^n \alg A_1 u(h_k) \bigl) = \{0\}$. The set~$I$ is obviously a closed right ideal of $\alg A_1$ and by hypothesis~\ref{en:lin_indep:com} also a left ideal. Therefore, $J := I \cap I^*$ is a closed two-sided \Star ideal of~$\alg A_1$. Since $\alg A_1$ is simple, it follows that we have either $J = \{0\}$, in which case $I=\{0\}$ and the proof is finished, or we have $J = \alg A_1$ and hence $I = \alg A_1$. The later case leads to a contradiction as follows. Suppose $I=\alg A_1$, then we find elements $x_1, \dots, x_n \in \alg A_1$ with
	\begin{equation}
		\label{eq:lin_indep:1}
		\one_{\mathcal{K}} =  x_1 u(h_1) + \dots +  x_n u(h_n),
	\end{equation}
	and by the induction hypothesis we have $x_k \neq 0$ for all $1 \le k \le n$. Then for all $x \in \alg A_1$ we 
	obtain
	\begin{equation*}
		 x x_1 u(h_1) + \dots +  x x_n u(h_n)= x =x_1 u(h_1) x + \dots +  x_n u(h_n) x
	\end{equation*}
	On  the right hand side, each element $x_k u(h_k) x$, $1 \le k \le n$, lies in $ \alg A_1 u(h_k)$ by hypothesis~\ref{en:lin_indep:com}. Due to uniqueness of the representation in the direct sum $\sum_{k=1}^n  \alg A_1 u(h_k)$, we obtain $x x_k u(h_k)  = x_k u(h_k) x$ for all $x \in \alg A_1$. Since the representation $\alg A_1 \subseteq \alg B(\mathcal K)$ is irreducible, we may conclude that $x_k u(h_k)$ is a nonzero multiple of $\one_{\hilb K}$. That is, $u(h_k)$ is a multiple of $x_k^* \in \alg A_1$ which contradicts Hypothesis~\ref{en:lin_indep:inner}. 
\end{proof}

From now on we consider $\cTorus$ in its Schr\"odinger representation.

\pagebreak[3]
\begin{lemma}	\label{lem:automs_2_unitaries}
	Let $H$ be a group and let $\varphi:H \to \Out^\infty(\cTorus)$ be a group homomorphism. Then the following assertions hold:
	\begin{enumerate}
		\item 	\label{en:automs_2_unitaries:ex}
			There is a map $u: H \to \U \bigl( L^2(\R) \bigr)$ with $u(h) \, \sTorus = \sTorus \, u(h)$ and \mbox{$[\Ad[u(h)]_{\mid \cTorus}] = \varphi(h)$} for every $h \in H$.
		\item 	\label{en:autosm_2_unitaries:delta}
			For every map $u:H \to \U \bigl( L^2(\R) \bigr)$ as in part~\ref{en:automs_2_unitaries:ex} of this Lemma the element $\delta u(h_1,h_2):=u(h_1) u(h_2) u(h_1 h_2)^*$ lies in $\sTorus$ for all $h_1,h_2 \in H$.
	\end{enumerate}
\end{lemma}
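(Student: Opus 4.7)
The plan for part~\ref{en:automs_2_unitaries:ex} is to build $u(h)$ as an explicit product of the three families of unitaries described in Section~\ref{sec:Bogoliubov}. For each $h \in H$ I first pick a lift $\alpha_h \in \Aut^\infty(\cTorus)$ of the class $\varphi(h)$, and then use the semidirect product decomposition $\Aut^\infty(\cTorus) = P\U_0(\sTorus) \rtimes (\mathbb T^2 \rtimes \SL_2(\Z))$ to write $\alpha_h = \Ad[v_h] \circ \gamma_{z_h} \circ \beta_{M_h}$ with $v_h \in \sTorus$ a unitary, $z_h \in \mathbb T^2$, and $M_h \in \SL_2(\Z)$. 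Lifting $z_h$ to some $x_h \in \R^2$ and using that on $L^2(\R)$ the three pieces are implemented by conjugation with $v_h$, $S_{x_h}$, and $U_{M_h}$ respectively, I would set $u(h) := v_h \, S_{x_h} \, U_{M_h}$. By construction $[\Ad[u(h)]_{\mid\cTorus}] = \varphi(h)$. The relation $u(h)\,\sTorus = \sTorus\,u(h)$ is equivalent to $\Ad[u(h)](\sTorus) = \sTorus$ and follows factor by factor: $\Ad[v_h]$ preserves $\sTorus$ trivially, while $\Ad[S_{x_h}]$ and $\Ad[U_{M_h}]$ restrict on $\cTorus$ to a gauge and a lattice transformation, both of which manifestly preserve $\sTorus$.

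For part~\ref{en:autosm_2_unitaries:delta}, the key computation is
\begin{equation*}
	\Ad\bigl[\delta u(h_1, h_2)\bigr]_{\mid \cTorus}
	= \Ad[u(h_1)]_{\mid \cTorus} \circ \Ad[u(h_2)]_{\mid \cTorus} \circ \bigl( \Ad[u(h_1 h_2)]_{\mid \cTorus} \bigr)^{-1},
\end{equation*}
which represents $\varphi(h_1)\varphi(h_2)\varphi(h_1 h_2)^{-1} = \one$ in $\Out^\infty(\cTorus)$. So $\Ad[\delta u(h_1, h_2)]_{\mid \cTorus}$ is an inner smooth automorphism of $\cTorus$, and by the characterisation of $\Inn(\cTorus) \cap \Aut^\infty(\cTorus)$ recalled in Section~\ref{sec:Bogoliubov} it is implemented by some unitary $w \in \sTorus$. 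Then $w^* \delta u(h_1, h_2) \in \End(L^2(\R))$ commutes with every element of $\cTorus$; since the Schr\"odinger representation of $\cTorus$ on $L^2(\R)$ is irreducible, $w^* \delta u(h_1, h_2)$ must be a scalar of modulus one, and hence $\delta u(h_1, h_2) \in \mathbb T \cdot w \subseteq \sTorus$.

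The delicate step will be the end of part~\ref{en:autosm_2_unitaries:delta}: knowing only that $\Ad[\delta u(h_1, h_2)]_{\mid \cTorus}$ is inner a priori places $\delta u(h_1, h_2)$ in a coset of the commutant of $\cTorus$ inside $\End(L^2(\R))$, which in general could be much larger than $\mathbb T \cdot w$. What saves the argument is the combination of two ingredients supplied by Section~\ref{sec:Bogoliubov}: first, that $w$ can be chosen inside the smooth algebra $\sTorus$ rather than merely inside $\cTorus$, and second, that the Schr\"odinger representation of $\cTorus$ is irreducible, so that the commutant reduces to scalars. Both points explain why the paper set up the Schr\"odinger picture before attempting this lemma.
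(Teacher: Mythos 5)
Your proposal is correct and takes essentially the same route as the paper: part~\ref{en:automs_2_unitaries:ex} implements $\varphi$ by products of the Schr\"odinger-picture unitaries (the paper picks $u(h) := S_{x(h)} U_{M(h)}$ directly from the description $\Out^\infty(\cTorus) = (\mathbb T / \langle \exp(2\pi\imath \theta) \rangle)^2 \rtimes \SL_2(\Z)$, so your additional inner factor $v_h \in \sTorus$ is a harmless variation), and part~\ref{en:autosm_2_unitaries:delta} is exactly the paper's argument, combining the fact that $\Ad[\delta u(h_1,h_2)]_{\mid \cTorus}$ is a smooth inner automorphism implemented by a unitary in $\sTorus$ with irreducibility of the Schr\"odinger representation to conclude $\delta u(h_1,h_2) \in \mathbb T \cdot w \subseteq \sTorus$.
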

\begin{proof}
	\begin{enumerate}
	\item 
		This is essentially a consequence of the discussion in Section~\ref{sec:Bogoliubov}.  
		We recall that $\Out^\infty(\cTorus) = (\mathbb T / \langle \exp(2\pi\imath \theta) \rangle)^2 \rtimes \SL_2(\Z)$ and write elements of $\Out^\infty(\cTorus)$ as tuples $\varphi(h) = \bigl( w(h), M(h) \bigr)$ with $w(h) \in (\mathbb T/\langle \exp(2\pi\imath \theta)\rangle)^2$ and $M(h) \in \SL_2(\Z)$, respectively. Furthermore, we denote by $q: \R^2 \to (\mathbb T / \langle \exp(2\pi\imath \theta) \rangle)^2$ the quotient map given by $q(s,t) := [(\exp(2\pi \imath s), \exp(2\pi \imath t))]$. Next, for each $h \in H$, we pick an element $x(h) \in \R^2$ with $q \bigl( x(h) \bigr) = w(h)$ and put $u(h) := S_{x(h)} U_{M(h)}$. Then it is easily verified that the resulting map $u:H \to \U \bigl( L^2(\R) \bigr)$, $h \mapsto u(h)$ fulfills the asserted properties.
	\item
		Let $u:H \to \U \bigl( L^2(\R) \bigr)$ be a map as in part~\ref{en:automs_2_unitaries:ex}. Furthermore, let $h_1, h_2 \in H$ and put $\delta u := \delta u(h_1, h_2)$ for brevity. Then the automorphism $\Ad[ \delta u ]$ can be restricted to a smooth inner automorphism of $\cTorus \subseteq \mathcal B \bigl( L^2(\R) \bigr)$. Thus, there is a unitary $v \in \sTorus$ with $(\delta u) x (\delta u)^* = v x v^*$ or, equivalently, $v^* (\delta u) x = x v^* (\delta u)$ for all $x \in \cTorus$. Since the representation $\cTorus \subseteq \mathcal B \bigl( L^2(\R) \bigr)$ is irreducible, we conclude that $v^* (\delta u)$ is a multiple of $\one$ and hence $\delta u$ lies in $\sTorus$.
	\qedhere
	\end{enumerate}
\end{proof}

We now return to the original question whether for a given finite Abelian group $G$ and a smooth group homomorphism $\varphi:\hat G \to \Out(\cTorus)$ there is a covering of $\cTorus$ with Picard homomorphism $\varphi$. Due to Lemma~\ref{lem:cocycle} we may without loss of generality assume that $\varphi$ is injective. Then by Lemma~\ref{lem:automs_2_unitaries} we find a map $u: \hat G \to \U \bigl( L^2(\R) \bigr)$ with $u(1)=\one$ implementing $\varphi$. A~few moments thought show that Lemma~\ref{lem:alg_vs_delta} and Lemma~\ref{lem:sum direct} can be applied and hence the sum
\begin{align*}
	\aA := \sum_{\chi \in \hat{G}} \cTorus \, u(\chi) \subseteq \mathcal{B}(L^2(\R))
\end{align*}
defines a $\hat G$-graded \Star algebra. Since $G$ is finite, $\aA$ is in fact a C\Star algebra. Furthermore, every summand $\cTorus \, u(\chi)$, $\chi \in \hat G$, admits a representation of $G$ given by multiplying with the corresponding character. Taking direct sums and defining the action componentwise thus yields a C\Star dynamical system $(\aA, G,\alpha)$.

\begin{thm}\label{thm:main thm 1}
	The C\Star dynamical system $(\aA, G, \alpha)$ is a covering of $\cTorus$ with Picard homomorphism $\varphi$. 
\end{thm}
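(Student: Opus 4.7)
The statement packages four claims: $\aA$ is a C\Star algebra, the formula for $\alpha$ extends to a continuous \Star action of $G$ on $\aA$, this action is free, and its Picard homomorphism equals $\varphi$. I would address them in this order, exploiting the $\hat G$\ndash grading throughout.

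The prescription
\[
	\alpha_g \bigl( \textstyle\sum_\chi x_\chi u(\chi) \bigr) := \textstyle\sum_\chi \chi(g)\, x_\chi u(\chi)
\]
defines a \Star automorphism of the algebraic sum $\aA_{\mathrm{alg}} := \sum_\chi \cTorus u(\chi)$ by Lemmas~\ref{lem:alg_vs_delta} and~\ref{lem:sum direct}. To bound it and to see that $\aA_{\mathrm{alg}}$ is already norm closed in $\End(L^2(\R))$, I would implement $\alpha_g$ as an inner automorphism on the auxiliary Hilbert space $\hilb H := L^2(\R) \tensor \ell^2(G)$. With the diagonal unitaries $V_\chi e_h := \chi(h)\, e_h$ and translation unitaries $U_g e_h := e_{gh}$ on $\ell^2(G)$, the assignment $\rho\bigl(x u(\chi)\bigr) := x u(\chi) \tensor V_\chi$ yields a faithful \Star representation $\rho \colon \aA_{\mathrm{alg}} \to \End(\hilb H)$ that satisfies $(\one \tensor U_g^*)\, \rho(a)\, (\one \tensor U_g) = \rho\bigl(\alpha_g(a)\bigr)$ for all $a \in \aA_{\mathrm{alg}}$. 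The pullback $\|\cdot\|_\rho$ of the operator norm via $\rho$ is then a $G$-invariant C\Star norm on $\aA_{\mathrm{alg}}$ dominating the inherited norm $\|\cdot\|_0$, and the corresponding completion $\widetilde\aA$ carries a continuous $G$-action. Since $\alpha_g$ acts on each component $\cTorus u(\chi)$ as multiplication by the scalar $\chi(g)$, the norms $\|\cdot\|_\rho$ and $\|\cdot\|_0$ coincide on each $\cTorus u(\chi)$, so the isotypic decomposition of $\widetilde\aA$ identifies it as a Banach space with $\bigoplus_\chi \cTorus u(\chi) = \aA_{\mathrm{alg}}$. I would then argue that the canonical \Star homomorphism $\pi \colon \widetilde\aA \to \End(L^2(\R))$ extending the inclusion has trivial kernel: if $\pi(a) = 0$ for $a = \sum_\chi a_\chi$ with $a_\chi \in \cTorus u(\chi)$, then $\sum_\chi a_\chi = 0$ holds inside $\End(L^2(\R))$, forcing each $a_\chi = 0$ by Lemma~\ref{lem:sum direct}. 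Consequently $\pi$ is isometric, the two norms coincide on $\aA_{\mathrm{alg}}$, and $\aA = \overline{\aA_{\mathrm{alg}}} = \aA_{\mathrm{alg}}$ carries a continuous \Star action of $G$ with the prescribed isotypic components. This identification of the algebraic with the analytic direct sum is the step I expect to require the most care.

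The remaining points would then be direct. For freeness, I would show that for each $\chi \in \hat G$ the element $u(\chi)^* \tensor u(\chi) \in \aA \tensor_{\mathrm{alg}} \aA$ is mapped by the Ellwood map $\Phi$ to the function $g \mapsto u(\chi)^* \alpha_g(u(\chi)) = \chi(g)\, \one$; left-multiplying by $a \tensor \one$ for $a \in \aA$ then yields all functions $g \mapsto \chi(g) a$, and since the characters of $G$ separate its points, Stone--Weierstrass concludes that the image of $\Phi$ is norm dense in $\Cont(G, \aA)$. Finally, the isotypic decomposition $\aA = \bigoplus_\chi \cTorus u(\chi)$ identifies the fixed-point algebra as the trivial component $\cTorus$, so $(\aA, G, \alpha)$ is a covering of $\cTorus$ in the sense of Section~\ref{sec:motivation}; and since the unitary $u(\chi) \in A(\chi)$ was chosen in Lemma~\ref{lem:automs_2_unitaries} to implement $\varphi(\chi)$, the Picard homomorphism $\varphi_\aA(\chi) = [\Ad[u(\chi)]_{\mid \cTorus}]$ equals $\varphi(\chi)$, completing the argument.
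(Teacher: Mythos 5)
Your proposal is correct, and its endgame coincides with the paper's: the isotypic component $A(\chi)$ is $\cTorus\, u(\chi)$, hence $\aA^G = \cTorus\, u(1) = \cTorus$, and $\varphi_\aA(\chi) = \bigl[\Ad[u(\chi)]_{\mid \cTorus}\bigr] = \varphi(\chi)$ by the choice of $u$ in Lemma~\ref{lem:automs_2_unitaries}. Where you genuinely depart from the paper is in the two analytic steps. First, the paper disposes of completeness with the one-line assertion that since $G$ is finite, $\aA$ is in fact a C\Star algebra; you actually prove this, via the faithful covariant representation $\rho\bigl(x u(\chi)\bigr) = x u(\chi) \tensor V_\chi$ on $L^2(\R) \tensor \ell^2(G)$ -- in effect a regular-representation embedding -- which makes the $G$-action isometric, identifies the abstract completion component-wise with the algebraic sum, and then exploits that an injective \Star homomorphism between C\Star algebras is isometric to conclude that $\sum_{\chi} \cTorus\, u(\chi)$ is already closed in $\End\bigl(L^2(\R)\bigr)$; this is sound and fills in a point the paper leaves to the reader. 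Second, for freeness the paper cites \cite[Rmk.~5.15]{SchWa15} and \cite[Thm.~5.8]{SchWa16} (a unitary in every isotypic component forces freeness), whereas you verify the Ellwood condition by hand: $\Phi\bigl(a u(\chi)^* \tensor u(\chi)\bigr)(g) = \chi(g)\, a$, and such functions span $\Cont(G,\aA)$. The only cosmetic flaw is your appeal to Stone--Weierstrass: the image of $\Phi$ is a priori only a linear subspace, not a subalgebra, so the theorem does not literally apply to it; but no approximation is needed anyway, since for a finite Abelian group the characters span $\Cont(G)$ exactly by Fourier inversion, so the image of $\Phi$ contains $\lin(\hat G) \tensor \aA = \Cont(G,\aA)$ on the nose. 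In sum, your route is self-contained and elementary where the paper's relies on assertion and citation, at the cost of length; the paper's version buys brevity by outsourcing exactly these two points.
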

\begin{proof}
	For  $\chi \in \hat G$ the isotypic component $A(\chi)$ is given by $\cTorus \, u(\chi)$. In particular, we have $\aA^G = \cTorus$. Moreover, freeness of $(\aA, G, \alpha)$ follows from the fact that every isotypic component contains a unitary element (see \cite[Rmk.~5.15]{SchWa15},  \cite[Thm.~5.8]{SchWa16}). Since, by Lemma~\ref{lem:automs_2_unitaries}, for each $\chi \in \hat G$ the class of the automorphism $\Ad[u(\chi)]$ in $\Out(\cTorus)$ coincides with $\varphi(\chi)$, we conclude that the Picard homomorphism of $(\aA, G,\alpha)$ is given by $\varphi$.
\end{proof}

\begin{rmk}
	We would like to point out that with little effort the arguments and the results presented in this section extend from finite Abelian groups to coactions of group C\Star algebras of finite groups.
\end{rmk}

\appendix
\section{On Free Group Actions}

In this short appendix we show that the underlying group homomorphism of  a free C\Star dynamical system is injective. Although it might be well-known to experts, we have not found such a statement explicitly discussed in the literature.

\begin{lemma}\label{lem:injective}
	Let $(\aA, G, \alpha)$ be a C\Star dynamical system and $\ker \alpha := \{ g \in G \;|\; \alpha_g = \id_\aA \}$.
	\begin{enumerate}
	\item 
		If $(\aA, G, \alpha)$ is free, then $\ker \alpha$ is trivial.
	\item
		If $G$ is Abelian and $\aA^G = \C \one$, then $\ker \alpha$ is trivial if and only if $(\aA, G, \alpha)$ is free.
	\end{enumerate}
\end{lemma}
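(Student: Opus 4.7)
The plan is to treat the two parts separately. For part (1), I would translate the hypothesis $\alpha_h = \id_\aA$ into a right-translation symmetry of the range of the Ellwood map and then use density together with Urysohn separation on the compact Hausdorff group $G$ to force $h = e$. For part (2) the nontrivial direction is ``$\ker\alpha = \{e\}$ implies free'', and for this I would combine three ingredients: in the ergodic abelian case each nonzero isotypic component is spanned by a unitary, Pontryagin duality upgrades triviality of the kernel into ``every character is realized'', and Peter--Weyl density then produces the Ellwood condition.

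For part (1), assume $\alpha_h = \id_\aA$. For every elementary tensor $x \otimes y$ and every $g \in G$,
\[
\Phi(x \otimes y)(gh) = x\,\alpha_g\bigl(\alpha_h(y)\bigr) = x\,\alpha_g(y) = \Phi(x \otimes y)(g),
\]
so every element in the range of $\Phi$, and by closedness and freeness every $f \in C(G,\aA)$, is invariant under $g \mapsto gh$. If $h \neq e$, since $G$ is compact Hausdorff we may pick a continuous scalar function $\phi\colon G \to \C$ with $\phi(e) \neq \phi(h)$; then $\phi \cdot \one_\aA \in C(G,\aA)$ violates this invariance. Hence $h = e$.

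For part (2), the ``only if'' direction is part (1). For the ``if'' direction, assume $G$ is compact abelian, $\aA^G = \C\one$, and $\ker\alpha = \{e\}$. Set $S := \{\chi \in \hat G : A(\chi) \neq 0\}$. For any nonzero $x \in A(\chi)$, both $x^*x$ and $xx^*$ lie in $A(1) = \aA^G = \C\one$ and are nonzero positive, which forces $x$ to be a scalar multiple of a unitary; hence $A(\chi) = \C\,u(\chi)$ for some unitary $u(\chi)$, and $S$ is a subgroup of $\hat G$. A direct computation with isotypic decompositions shows $\ker\alpha = S^\perp$, so Pontryagin duality applied to the discrete group $\hat G$ forces $S = \hat G$. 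Finally, for each $\chi \in \hat G$ and $a \in \aA$,
\[
\Phi\bigl(a\,u(\chi)^* \otimes u(\chi)\bigr)(g) = a\,u(\chi)^* \chi(g)\,u(\chi) = \chi(g)\,a,
\]
so all $\aA$-valued trigonometric polynomials lie in the range of $\Phi$; Stone--Weierstrass/Peter--Weyl then yield density in $C(G,\aA)$ and hence freeness.

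None of the individual steps is deep; the main conceptual point is that in the abelian ergodic case, freeness, triviality of $\ker\alpha$, and the spectral subgroup $S$ exhausting $\hat G$ are all equivalent via Pontryagin duality. The mild technical ingredients are one-dimensionality of spectral subspaces under ergodicity, the identification $\ker\alpha = S^\perp$, and standard Peter--Weyl density; the most delicate bookkeeping will be ensuring that the closed invariant subspace argument in part (1) and the density argument in part (2) are carried out cleanly for the Banach space valued $C(G,\aA)$.
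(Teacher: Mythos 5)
Your proof is correct, and its skeleton matches the paper's: part (1) exploits that the Ellwood range cannot be dense if it is invariant in the kernel direction, and part (2) introduces the spectral subgroup $S$ (the paper calls it $N$), identifies $\ker\alpha$ with its annihilator, and invokes Pontryagin duality. The difference is that you prove from scratch what the paper cites. For part (1) the paper first restricts the system to $K = \ker\alpha$, invoking \cite[Prop.~3.17]{SchWa15} that restrictions of free actions to closed subgroups are free, and then observes that the restricted Ellwood map only produces constant functions; your right-translation-invariance argument on all of $C(G,\aA)$ reaches the same contradiction without needing that restriction result. For part (2) the paper simply asserts the known criterion that, under ergodicity ($\aA^G = \C\one$), freeness is equivalent to all isotypic components being nonzero; you instead prove the needed implication directly, by showing each nonzero $A(\chi)$ is spanned by a unitary $u(\chi)$ and then computing $\Phi\bigl(a\,u(\chi)^* \otimes u(\chi)\bigr)(g) = \chi(g)\,a$, so that Stone--Weierstrass gives density of the Ellwood range. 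The paper's version is shorter given its references; yours is self-contained and makes transparent that the only real inputs are Peter--Weyl totality of the isotypic components, Urysohn separation on the compact group, and the bi-annihilator theorem for the discrete dual.
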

\begin{proof}
	\begin{enumerate}
	\item 
		Let $K := \ker \alpha$. Then the restricted C\Star dynamical system $(\aA, K, \alpha_{\mid K})$ is free \cite[Prop.~3.17]{SchWa15} and hence the Ellwood map has dense range. Since $\alpha_g = \id_\aA$ for all $g \in K$, we may immediately conclude that $K$ must be trivial. 
	\item
		Since $\aA^G = \C \one$, each isotypic component $A(\chi)$, $\chi \in \hat G$, is a 0- or 1-dimensional and $(\aA, G, \alpha)$ is free if and only if every $A(\chi)$, $\chi \in \hat G$, is nonzero. Let $N$ be the set of characters $\chi \in \hat G$ for which $A(\chi)$ is non-zero. Then a few moments thought show that $N$ is a subgroup of $\hat G$. Moreover, the isotypic components $A(\chi)$ with $\chi \in N$ are total in $\aA$. It follows that, for each $g \in G$, we have $\alpha_g = \id_\aA$ if and only if $\chi(g) = 1$ for all $\chi \in N$. That is, $\ker \alpha$ is the dual of $\hat G/N$. Thus $\ker \alpha$ is trivial if and only if $N = \hat G$, which happens if and only if $(\aA, G, \alpha)$ is free.
	\qedhere
	\end{enumerate}
\end{proof}

\begin{lemma}\label{lem:cocycle}
	Let $N$ and $H$ be compact Abelian groups, $\omega: H \times H \to N$ a continuous symmetric 2-cocycle and $G := N \times_\omega H$ the corresponding central extension. Furthermore, let $(\aA, N, \alpha)$ be a free C\Star dynamical system with Picard homomorphism $\varphi: \hat N \to \Pic(\aA^N)$. Consider the C\Star algebra $\aA' := \Cont(H, \aA) = \Cont(H) \tensor \aA$ endowed with the action
	\begin{equation*}
		\bigl(\alpha'_{(n,h)}f\bigr)(h') := \alpha_{n + \omega(h',h)} \bigl( f(h' + h) \bigr)
	\end{equation*}
	for $(n,h) \in G$ and $h'\in H$. Then $(\aA', G, \alpha')$ is a free C\Star dynamical system with fixed point algebra $\one \tensor \aA^N \cong \aA^N$ and Picard homomorphism given by 
	\begin{equation*}
		\varphi': \hat G \to \Pic(\aA^N), \qquad \varphi'(\chi) = \varphi(\chi_N),
	\end{equation*}
	where $\chi_N(n) := \chi(n,1)$ for all $n \in N$.
\end{lemma}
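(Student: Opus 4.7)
The plan is to prove the three assertions---that $\alpha'$ is a well-defined action with fixed-point algebra $\one \tensor \aA^N$, that it is free, and that its Picard homomorphism is $\chi \mapsto \varphi(\chi_N)$---by explicitly computing the isotypic components of $\alpha'$; the last two statements will then fall out simultaneously.

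First I would verify the action axioms. Without loss of generality $\omega$ is normalized, so that $\omega(h,0) = \omega(0,h) = 0$ for all $h \in H$; this immediately gives $\alpha'_{(0,0)} = \id_{\aA'}$. Direct substitution shows
$$(\alpha'_{(n,h)} \alpha'_{(n',h')} f)(h'') = \alpha_{n+n'+\omega(h'',h)+\omega(h''+h,h')}\bigl(f(h''+h+h')\bigr),$$
while the corresponding formula for $\alpha'_{(n,h)(n',h')}$ produces $\omega(h,h')+\omega(h'',h+h')$ in the exponent; equality is exactly the 2-cocycle identity for $\omega$. For the fixed-point algebra, $N$-invariance of $f$ forces $f(h') \in \aA^N$ for every $h'$, and then $(0,h)$-invariance reduces, using the normalization, to $f(h'+h) = f(h')$, so $f$ is a constant $\aA^N$-valued function. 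Hence $(\aA')^G = \one \tensor \aA^N$.

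Next I would fix $\chi \in \hat G$ and analyze $A'(\chi)$. Every character of $G$ decomposes as $\chi(n,h) = \chi_N(n) \chi(0,h)$. Thus the $N$-covariance of $f \in A'(\chi)$ forces $f(h') \in A(\chi_N)$ pointwise, and $(0,h)$-covariance becomes
$$\chi_N(\omega(h',h))\, f(h'+h) = \chi(0,h)\, f(h').$$
Setting $h'=0$ yields $f(h) = \chi(0,h) f(0)$, so $f$ is determined by $a := f(0) \in A(\chi_N)$. Conversely, the formula $f_a(h) := \chi(0,h)\, a$ defines an element of $A'(\chi)$ precisely when $\chi(0,h+h') \chi_N(\omega(h',h)) = \chi(0,h) \chi(0,h')$, which follows from multiplicativity of $\chi$ on $G$ combined with the symmetry of $\omega$ (this is the step where the symmetry hypothesis enters, reconciling $\omega(h',h)$ with $\omega(h,h')$).

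Finally I would check that $\Phi_\chi \colon A'(\chi) \to A(\chi_N)$, $f \mapsto f(0)$, is an $\aA^N$-bimodule isomorphism preserving both inner products. The left and right actions of $\one \tensor \aA^N$ on $A'(\chi)$ are pointwise, hence correspond under $\Phi_\chi$ to the standard $\aA^N$-actions on $A(\chi_N)$. For the inner products one computes $(fg^*)(h) = \chi(0,h)\overline{\chi(0,h)}\, f(0) g(0)^* = f(0) g(0)^*$, so $fg^*$ is the constant function with value ${}_{\aA^N}\langle f(0), g(0) \rangle$, and analogously for the right inner product. Hence $A'(\chi) \cong A(\chi_N)$ as Morita self-equivalences over $\aA^N$, which yields $\varphi'(\chi) = [A'(\chi)] = [A(\chi_N)] = \varphi(\chi_N)$, and since every isotypic component is therefore full, the action $(\aA', G, \alpha')$ is free by the standard criterion used throughout the paper. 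The only real obstacle is careful bookkeeping with the cocycle---in particular pinpointing the single spot where symmetry of $\omega$ is required, namely in verifying that the formula $f(h) = \chi(0,h)a$ defines a consistent element of $A'(\chi)$. Beyond that, every step is a direct computation.
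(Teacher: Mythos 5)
Your proof is correct and takes essentially the same route as the paper's: both arguments reduce everything to the explicit identification of the isotypic components, $A'(\chi) = \chi_H \otimes A(\chi_N)$, from which the fixed-point algebra, freeness, and the formula $\varphi'(\chi) = \varphi(\chi_N)$ all follow, and you merely spell out the computations the paper labels ``standard''. One tiny inaccuracy: the symmetry of $\omega$ is not what reconciles $\omega(h',h)$ with $\omega(h,h')$ in your consistency check --- applying multiplicativity of $\chi$ to the product $(0,h')(0,h) = \bigl(\omega(h',h), h'+h\bigr)$ already yields exactly the identity you need --- rather, symmetry is what makes $G$ Abelian, so that the character-theoretic framework applies in the first place.
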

\begin{proof}
	It is immediate that $\alpha'$ is strongly continuous and respects the involution. Moreover, by a standard computation the cocycle condition of $\omega$ implies that $\alpha'$ is a homomorphism. Next, let $\chi \in \hat G$. We denote by $A'(\chi)$ and $A(\chi_N)$ the isotypic components of $\alpha'$ and $\alpha$, respectively. Then a few moments thought show that $f \in \Cont(H, \aA)$ belongs to $A'(\chi)$ if and only if
	\begin{equation*}
		\alpha_{n+\omega(h',h)} \bigl( f(h' + h) \bigr) = \chi(n,h) f(h')
	\end{equation*}
	for all $n \in N$ and $h,h' \in H$. Putting $n:=0$ and $h' := 0$ we find that $f(h) = \chi(0,h) f(0)$. That is, we have $f = \chi_H \tensor f(0)$ for some $f(0) \in \aA$, where $\chi_H(h) := \chi(0,h)$ for $h \in H$. Putting $h := h' := 0$ we see that $f(0) \in A(\chi_N)$. A quick check then verifies 
	\begin{equation*}
		A'(\chi) = \chi_H \tensor A(\chi_N).
	\end{equation*}
	This shows that $\one \tensor \aA^N \cong \aA^N$ is the fixed point algebra of $(\aA', G, \alpha')$ and that $A'(\chi)$ and $A(\chi_N)$ are isomorphic Morita self-equivalences over $\aA^N$. In particular, we conclude that $(\aA', G, \alpha')$ is free and that $\varphi'(\chi) = [A'(\chi)] = [A(\chi_N)] = \varphi(\chi_N)$ for all $\chi \in \hat G$.
\end{proof}

\section*{Acknowledgement}

The first name author would like to acknowledge \emph{iteratec GmbH}. The second name author would like to express his greatest gratitude to
\emph{Carl Tryggers Stiftelse f\"or Vetenskaplig Forskning} for supporting this research and to Blekinge Tekniska H\"ogskola (BTH) for providing an excellent research environment.

\bibliographystyle{abbrv}
\bibliography{short,NC2T}

\begin{thebibliography}{10}

\bibitem{BaCoHa15}
P.~Baum, K.~D. Commer, and P.~M. Hajac.
\newblock Free actions of compact quantum groups on unital {C$^*$}-algebras.
\newblock preprint, arXiv:1304.2812v3, Feb. 2015.

\bibitem{Can16}
C.~R. Canlubo.
\newblock Non-commutative covering spaces.
\newblock preprint, arXiv:1612.08673v1, Dec. 2016.

\bibitem{CoRi87}
A.~Connes and M.~A. Rieffel.
\newblock {Y}ang-{M}ills for noncommutative two-tori.
\newblock In {\em Operator algebras and mathematical physics ({I}owa {C}ity,
  {I}owa, 1985)}, volume~62 of {\em Contemp. Math.}, pages 237--266. Amer.
  Math. Soc., Providence, RI, 1987.

\bibitem{cycliccuntz}
J.~Cuntz, G.~Skandalis, and B.~Tsygan.
\newblock {\em Cyclic Homology in Non-Commutative Geometry}.
\newblock Encyclopaedia of Mathematical Sciences. Springer Berlin Heidelberg,
  2003.

\bibitem{EchNeOy09}
S.~Echterhoff, R.~Nest, and H.~Oyono-Oyono.
\newblock Principal non-commutative torus bundles.
\newblock {\em Proc. Lond. Math. Soc.}, 99(1):1--31, 2009.

\bibitem{Elliott86}
G.~A. Elliott.
\newblock The diffeomorphism group of the irrational rotation {C$^*$}-algebra.
\newblock {\em C. R. Math. Rep. Acad. Sci. Canada}, 8(5):329--334, Oct. 1986.

\bibitem{ElliottEvans93}
G.~A. Elliott and D.~E. Evans.
\newblock The structure of the irrational rotation {C$^*$}-algebra.
\newblock {\em Annals of Mathematics}, 138(3):477--501, 1993.

\bibitem{Elliott93}
G.~A. Elliott and M.~R{\o}rdam.
\newblock The automorphism group of the irrational rotation {$C^*$}-algebra.
\newblock {\em Comm. Math. Phys.}, 155(1):3--26, 1993.

\bibitem{Ell00}
D.~A. Ellwood.
\newblock A new characterisation of principal actions.
\newblock {\em J. Func. Anal.}, 173(1):49--60, 2000.

\bibitem{gracia2000}
J.~Gracia-Bond{\'\i}a, J.~Varilly, and H.~Figueroa.
\newblock {\em Elements of Noncommutative Geometry}.
\newblock Birkh{\"a}user Advanced Texts Basler Lehrb{\"u}cher. Birkh{\"a}user
  Boston, 2000.

\bibitem{Ivan17}
P.~R. Ivankov.
\newblock Quantization of noncompact coverings.
\newblock preprint, arXiv:1702.07918v3, July 2017.

\bibitem{Ko97}
K.~Kodaka.
\newblock Picard groups of irrational rotation {C$^*$}-algebras.
\newblock {\em Proc. Lond. Math. Soc.}, 56:179--188, 1997.

\bibitem{MaSu09}
S.~Mahanta and W.~D. van Suijlekom.
\newblock Noncommutative tori and the {R}iemann-{H}ilbert correspondence.
\newblock {\em J. Noncommut. Geom.}, 3(2):261--287, 2009.

\bibitem{Meyer95}
P.~A. Meyer.
\newblock {\em Quantum Probability for Probabilists}, volume 1538 of {\em
  Lecture Notes in Mathematics}.
\newblock Springer, 2nd edition, 1995.

\bibitem{Mil08}
J.~S. Milne.
\newblock {\em Lectures on Etale Cohomology (v2.10)}.
\newblock Princeton Mathematical Series 33, Princeton University Press,
  323+xiii pages, 2008.
\newblock available at www.jmilne.org/math/.

\bibitem{Pel92}
C.~Peligrad.
\newblock Compact actions commuting with ergodic actions and applications to
  crossed products.
\newblock {\em Trans. Amer. Math. Soc.}, 331(2):825--836, 1992.

\bibitem{Petz90}
D.~Petz.
\newblock {\em An Invitation to the Algebra of Canonical Commutation
  Relations}, volume~2 of {\em Series A: Mathematical Physics}.
\newblock Leuven University Press, 1990.

\bibitem{Phi06}
C.~N. Phillips.
\newblock Every simple higher dimensional noncommutative torus is an
  {AT}-algebra.
\newblock preprint, arXiv:0609783v1, Sept. 2006.

\bibitem{Phi09}
C.~N. Phillips.
\newblock Freeness of actions of finite groups on {C$^*$}-algebras.
\newblock In {\em Operator structures and dynamical systems}, volume 503 of
  {\em Contemp. Math.}, pages 217--257. Amer. Math. Soc., 2009.

\bibitem{Rieffel89}
M.~A. Rieffel.
\newblock Deformation quantization of {Heisenberg} manifolds.
\newblock {\em Comm. Math. Phys}, 122(4):531--562, Dec. 1989.

\bibitem{Rieffel90}
M.~A. Rieffel.
\newblock Noncommutative tori--a case study of noncommutative differentiable
  manifolds.
\newblock In {\em Geometric and topological invariants of elliptic operators
  \emph{(}{B}runswick, 1988\emph{)}}, volume 105 of {\em Contemp. Math.}, pages
  191--211. Amer. Math. Soc., Providence, RI, 1990.

\bibitem{SchWa15}
K.~Schwieger and S.~Wagner.
\newblock {Part} {I}, {Free} actions of compact {A}belian groups on
  {C$^*$}-algebras.
\newblock {\em Adv. Math.}, 317:224--266, 2017.

\bibitem{SchWa16}
K.~Schwieger and S.~Wagner.
\newblock {Part} {II}, {Free} actions of compact groups on {C$^*$}-algebras.
\newblock {\em J. Noncommut. Geom.}, 11(2):641--688, 2017.

\bibitem{SchWa17}
K.~Schwieger and S.~Wagner.
\newblock {Part III}, {Free} actions of compact quantum groups on
  {C$^*$}-algebras.
\newblock {\em SIGMA}, 13(062):19 pages, 2017.

\bibitem{vNeu1931}
J.~von Neumann.
\newblock Die {E}indeutigkeit der {S}chr{\"o}dingerschen {O}peratoren.
\newblock {\em Mathematische Annalen}, 104:570--578, 1931.

\bibitem{Wa14}
S.~Wagner.
\newblock On noncommutative principal bundles with finite abelian structure
  group.
\newblock {\em J. Noncommut. Geom.}, 8(4):987--1022, 2014.

\end{thebibliography}

\end{document}